\newtheorem{theorem}{Theorem}
\newtheorem{proposition}{Proposition}
\newtheorem{KT}{Theorem K \!\!\!\!}
\newtheorem{KJ}{Theorem K-J \!\!\!\!}
\newtheorem{MTP}{Theorem MTP \!\!\!\!}
\newtheorem{lemma}{Lemma}
\newtheorem{lemmaCBC}{Lemma CBC (Convergence Borel--Cantelli) \!\!\!\!}
\newtheorem{lemmaDBC}{Lemma DBC (Divergence Borel--Cantelli)  \!\!\!\!}
\newtheorem{lemmaDBClocal}{Lemma LBC (Local  Borel--Cantelli)  \!\!\!\!}
\theoremstyle{remark}
\newtheorem{rem}{{Remark}}
\renewcommand{\Bbb}[1]{\mathbb{#1}}
\newcommand{\I}{[0,1]}      %
\newcommand{\N}{{\Bbb N}}         
\newcommand{\R}{{\Bbb R}}         
\newcommand{\Z}{{\Bbb Z}}         
\newcommand{\kgb}{\cK_{G,B}}
\newcommand{\egb}{E_{G,B}}
\newcommand{\cA}{{\cal A}}
\newcommand{\cF}{{\cal F}}
\newcommand{\cG}{{\cal G}}
\newcommand{\cH}{{\cal H}}
\newcommand{\cK}{{\cal K}}
\newcommand{\ve}{\varepsilon}
\newcommand{\supp}{\operatorname{supp}}
\newcommand{\diam}{r}
\begin{document}

\large

\title{\bf The Divergence Borel--Cantelli Lemma revisited}

\author{Victor Beresnevich \and Sanju Velani}

\date{}

\maketitle


\bigskip

\begin{abstract}
Let $(\Omega, \cA, \mu)$ be a probability space.  The classical Borel--Cantelli Lemma states that for any sequence of $\mu$-measurable sets $E_i$ ($i=1,2,3,\dots$), if the sum of their measures converges then the corresponding $\limsup$ set $E_\infty$ is of measure zero. In general the converse statement is  false. However, it is well known that the divergence counterpart is true under various additional `independence'  hypotheses. In this paper we revisit these hypotheses and establish both sufficient and necessary conditions for $E_\infty$ to have either positive or full measure.
\end{abstract}

{\footnotesize
\noindent\emph{Key words and phrases}:  $\limsup$  sets,  quasi-independence on average, Borel--Cantelli Lemma.

\noindent\emph{Mathematics Subject Classification 2000}: Primary 60F20; Secondary
11J83. }


\section{Introduction}

The Borel--Cantelli Lemma is a result in probability theory with wide reaching applications to various areas of mathematics.
To some extent, this note is motivated by its deep applications to number theory, in particular to metric number theory  -- see for example  \cite{BBDV09, BDV06, durham, HarmanMNT, HarmanPap, Sprindzuk} and references within. Loosely speaking, metric number theory  is concerned with the arithmetic  properties of almost all numbers and many key results in the theory are underpinned by variants of the divergence part of the Borel--Cantelli Lemma (see Lemma~DBC below). The divergence part is also known as the second Borel--Cantelli Lemma and it naturally shows up (in some form) in the proof of the notorious Duffin-Schaeffer Conjecture \cite{DuffinSchaeffer} recently  given by Koukoulopoulos $\&$ Maynard \cite{maynard} and its higher dimensional generalisation proved two decades earlier  by Pollington $\&$ Vaughan \cite{PV90}.
Indeed, the divergence Borel--Cantelli Lemma is very much at the heart of numerous other recent advances on topical problems in metric number theory,  such as those in the theory of  multiplicative and inhomogeneous Diophantine approximation  and Diophantine approximation on manifolds and more generally on fractals, see for example
\cite{ACY, BHV20, Sam-Duke, Sam-Nicolas, Sam-Nicolas2, Sam-Lei, KL, Felipe17, Felipe2020, Yu}.
In a nutshell,  our goal it is to revisit the Borel--Cantelli Lemma and to establish both sufficient and necessary conditions that guarantee either positive or full measure.

\subsection{Background and Motivation}
 To set the scene, let $(\Omega,\cA,\mu)$ be a probability space  and
let $E_i$ ($i \in \mathbb{N}$) be a family of  measurable subsets (events) of $\Omega$.  Also, let
$$
E_{\infty}:=\limsup_{i \to \infty} E_i := \bigcap_{t=1}^{\infty}
\bigcup_{i=t}^{\infty} E_i \ ;
$$
i.e. $ E_{\infty} $ is the set of $x \in \Omega$ such that
$ x \in E_i $ for infinitely many $i \in \N$.

Determining the measure  of $E_{\infty}$ turns out to be one of the fundamental problems considered within the framework of classical probability theory -- see for example   \cite[Chp.1 \S4]{bill}   and    \cite[Chp.47]{Port}                                                                                                                      for general background and further details. With this in mind, the following \emph{convergence  Borel--Cantelli Lemma}\/ provides a beautiful and truly simple criterion for zero measure.

\bigskip

\begin{lemmaCBC}
Let $(\Omega,\cA,\mu)$ be a probability space  and let $\{E_i\}_{i \in \N} $ be a sequence of subsets (events) in $\cA$.  Suppose  that $  \sum_{i=1}^{\infty} \mu(E_i) < \infty$.
Then,
 $$\mu(E_{\infty}) = 0 \, . $$
\end{lemmaCBC}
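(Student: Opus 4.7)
The plan is to exploit the natural expression of $E_\infty$ as a decreasing intersection of tail unions and to control the measure of each tail by the tail of the convergent series $\sum \mu(E_i)$. Set $T_t := \bigcup_{i=t}^{\infty} E_i$ so that by definition $E_\infty = \bigcap_{t=1}^{\infty} T_t$; the strategy is to estimate $\mu(T_t)$ from above by a quantity that vanishes as $t \to \infty$.

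First, I would observe that for every $t \in \N$ one has $E_\infty \subseteq T_t$, so by monotonicity of $\mu$,
$$\mu(E_\infty) \ \le\ \mu(T_t).$$
Next, countable subadditivity of $\mu$ applied to the union defining $T_t$ gives
$$\mu(T_t) \ \le\ \sum_{i=t}^{\infty} \mu(E_i).$$
Combining these two inequalities yields $\mu(E_\infty) \le \sum_{i=t}^{\infty} \mu(E_i)$ for every $t$.

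To finish, I would invoke the hypothesis that $\sum_{i=1}^{\infty} \mu(E_i) < \infty$: the tails of any convergent series of non-negative terms tend to $0$, so letting $t \to \infty$ in the bound above forces $\mu(E_\infty) = 0$. There is no real obstacle here — the argument rests only on monotonicity and countable subadditivity of the measure together with the elementary fact that tails of convergent series vanish, and does not use any independence or finiteness assumption beyond what the hypothesis already supplies. One could alternatively invoke continuity of measure from above (since $T_t \searrow E_\infty$ and $\mu(T_1)$ is finite by the hypothesis), but the subadditivity route is the shortest and the most transparent.
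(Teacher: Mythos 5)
Your proof is correct and is the standard, essentially canonical argument: bound $\mu(E_\infty)$ by $\mu\bigl(\bigcup_{i\ge t} E_i\bigr)$, apply countable subadditivity, and let the tail of the convergent series go to zero. The paper itself states Lemma~CBC as a known classical fact and supplies no proof of it, so there is no in-paper argument to compare against; your route is exactly the one any textbook (and surely the authors) would have in mind, and your remark that continuity from above gives an alternative finish is also accurate.
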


\bigskip

\noindent  This powerful lemma, which is also known as the \emph{first  Borel--Cantelli Lemma},  has applications in numerous disciplines. In particular, within the context of number theory it is very much at the heart of Borel's proof that almost all numbers are normal \cite{borel}.

In view of  Lemma CBC, it is natural to ask whether or not there is a sufficient condition that enables us to deduce that the measure of $E_{\infty}$ is positive or possibly even full; that is to say that
$$\mu(E_{\infty}) = \mu(\Omega)=1   \, . $$

\noindent  The divergence of the measure sum $\sum_{i=1}^{\infty} \mu(E_i)$ is clearly necessary but certainly not enough as the following simple example demonstrates.

\medskip

\noindent {\em Example.}
For $i \in \N$, let  $E_i=(0, \frac1i) \subset \Omega := [0,1]$ and $ \mu$ be one-dimensional Lebesgue measure restricted to $[0,1]$. Then
$$
\textstyle{\sum_{i=1}^\infty \mu(E_i)=\sum_{i=1}^\infty i^{-1} =\infty } $$
but
$$ E_{\infty}=\bigcap_{t=1}^\infty \bigcup_{i=t}^\infty E_i  =\bigcap_{t=1}^\infty (0,\tfrac1t)=\varnothing  \qquad {\rm and \ so  \ } \quad  \mu(E_{\infty}) = 0 \, .
$$

\medskip

The problem in the above example is that the building blocks $E_i$ of the $\limsup$ set under consideration  overlap `too much' - in fact they are nested.     The upshot is that in order to have  $\mu(E_{\infty}) > 0$,   we not only need the sum  of the measures to diverge but also  that the sets $E_i$ are in `some sense' independent; that is, we need to control overlaps!
Indeed, Borel $\&$ Cantelli showed that  \emph{mutual  independence} in the classical probabilistic  sense, which means that for every $n \in \N$
\begin{equation}\label{tottalyfullind}
\mu \left( \bigcap_{t=1}^n E_{i_t}  \right)  =  \prod_{t=1}^n\mu(E_{i_t})\quad    \text{for any indices }i_1<\ldots < i_n \, ,
\end{equation}
implies that $\mu( E_{\infty}) = 1 $.
This   full measure statement, often referred to as the \emph{second Borel--Cantelli Lemma}, led to a flurry of activity with the aim of relaxing the  mutual independence condition.  Notable  progress in this quest included replacing mutual independence by {\em pairwise independence} -- this corresponds to \eqref{tottalyfullind} being fulfilled with
$n=2$ rather than every $n \in \N$.  In turn, pairwise independence was  replaced by the upper bound  condition
\begin{equation}\label{fullind}
\mu ( E_s \cap E_t  )  \le   \mu(E_s) \mu(E_t)   \qquad    s \neq t  \,
\end{equation}
on the overlaps.  Undoubtedly, verifying  \eqref{fullind} is significantly easier than \eqref{tottalyfullind}. However, in many applications, we rarely have \eqref{fullind} let alone mutual independence as in the original statement of  the second Borel--Cantelli Lemma.  What is much more useful is the following variant which these days is often referred to as the \emph{divergence Borel--Cantelli Lemma}.

\begin{lemmaDBC}
 \! Let $(\Omega,\cA,\mu)$ be a probability space  and let $\{E_i\}_{i \in \N} $ be a sequence of subsets (events) in $\cA$.
Suppose that $ \sum_{i=1}^{\infty} \mu(E_i) =\infty$
and that there exists a constant $C>0$ such that
\begin{equation}\label{vbx1x}
\sum_{s,t=1}^Q  \mu(E_s\cap E_t)\le C\left(\sum_{s=1}^Q  \mu(E_s)\right)^2
\end{equation}
holds for infinitely many $Q\in\N$.
Then
$$
\mu(E_{\infty}) \ge C^{-1}\,.
$$
In particular, if $C=1$ then $$\mu(E_{\infty}) =1 \, . $$
\end{lemmaDBC}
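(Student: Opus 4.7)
The plan is to use a second-moment/Cauchy--Schwarz argument, which is the standard route for a ``quasi-independence'' Borel--Cantelli statement and is often attributed to Chung and Erd\H{o}s. Write $F_t := \bigcup_{i\ge t}E_i$, so that $F_t$ is a decreasing sequence with $E_\infty=\bigcap_t F_t$, and hence $\mu(E_\infty)=\lim_{t\to\infty}\mu(F_t)$. It therefore suffices to show $\mu(F_t)\ge C^{-1}$ for every fixed $t\in\N$.

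For fixed $t$, introduce the partial counting functions $S^{(t)}_Q(x):=\sum_{i=t}^Q\chi_{E_i}(x)$, where $\chi_E$ denotes the indicator function of a measurable set $E$. A direct computation gives
$$
\int S^{(t)}_Q\,d\mu=\sum_{i=t}^Q\mu(E_i),\qquad \int\bigl(S^{(t)}_Q\bigr)^{2}d\mu=\sum_{s,t'=t}^Q\mu(E_s\cap E_{t'}).
$$
Since $S^{(t)}_Q$ vanishes off $\bigcup_{i=t}^Q E_i\subseteq F_t$, applying Cauchy--Schwarz to $S^{(t)}_Q=S^{(t)}_Q\cdot\chi_{F_t}$ yields
$$
\left(\sum_{i=t}^Q\mu(E_i)\right)^{2}\le \mu(F_t)\cdot\sum_{s,t'=t}^Q\mu(E_s\cap E_{t'})\le \mu(F_t)\cdot C\left(\sum_{s=1}^Q\mu(E_s)\right)^{2},
$$
where the last step uses the hypothesis \eqref{vbx1x} (extending the range of summation from $\{t,\dots,Q\}$ to $\{1,\dots,Q\}$ only enlarges the left-hand side of \eqref{vbx1x}). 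Rearranging,
$$
\mu(F_t)\ \ge\ \frac{1}{C}\cdot\frac{\bigl(\sum_{i=t}^Q\mu(E_i)\bigr)^{2}}{\bigl(\sum_{i=1}^Q\mu(E_i)\bigr)^{2}}.
$$

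Now let $Q\to\infty$ along the subsequence on which \eqref{vbx1x} holds. Because $\sum_{i\ge1}\mu(E_i)=\infty$, the fixed initial piece $\sum_{i=1}^{t-1}\mu(E_i)$ is negligible compared with $\sum_{i=1}^Q\mu(E_i)$, so the ratio above tends to $1$. This gives $\mu(F_t)\ge C^{-1}$, and passing to the limit $t\to\infty$ yields $\mu(E_\infty)\ge C^{-1}$, as required. The special case $C=1$ then forces $\mu(E_\infty)=1$ since $\mu$ is a probability measure.

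The one delicate point, and really the whole reason this argument works, is the restriction of the Cauchy--Schwarz estimate to $F_t$ rather than to all of $\Omega$: writing the integral against the trivial $\mu(\Omega)=1$ factor would give no information at all, whereas using that $S^{(t)}_Q$ is supported on $F_t$ produces the desired factor $\mu(F_t)$. Everything else is bookkeeping: expanding the $L^{2}$-norm, invoking \eqref{vbx1x}, and exploiting divergence of the measure sum to replace tails by full sums in the limit.
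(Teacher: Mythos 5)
Your proof is correct and follows precisely the second-moment/Cauchy--Schwarz route that the paper itself only gestures at: the authors do not prove Lemma~DBC in the text but simply remark that it is ``essentially a consequence of the Cauchy--Schwarz inequality'' and defer to the references. The crucial device---applying Cauchy--Schwarz against $\chi_{F_t}$ rather than against $\chi_\Omega$ so as to extract the factor $\mu(F_t)$, and then using divergence of the measure sum to make the tail ratio tend to $1$---is exactly the standard Kochen--Stone argument and is carried out correctly.
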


\medskip

We refer the reader to  \cite{HarmanMNT, HarmanPap, Port, Sprindzuk} for the proof of the lemma which is essentially a consequence of the Cauchy-Schwarz inequality. As pointed out by  Harman \cite{HarmanPap}, the basic idea goes back to the works of Payley $\&$ Zygmund \cite{PZ1+2,PZ3} from the nineteen thirties.

\bigskip
\begin{rem} \label{remrem} To the best of our knowledge, the in particular part of Lemma~DBC  first explicitly appears in the work of  Erd\"{o}s $\&$ Reyni \cite[Lemma~C]{ER} from the late fifties.   Lamperti \cite{Lamperti-63:MR0143258} in the early sixties  established the weaker form of  above lemma in which \eqref{vbx1x} is replaced by
\begin{equation}\label{pairwiseSV}
\mu ( E_s \cap E_t  )  \le  C  \mu(E_s) \mu(E_t)   \qquad    s \neq t  \, .
\end{equation}
Clearly, this pairwise condition  implies the average condition \eqref{vbx1x}. Independently and around the same time, Kochen $\&$ Stone \cite{KS} established Lemma~DBC as stated --  see also \cite[Lemma~2, p.165]{SprindzukM}. It is worth mentioning, that Chung $\&$ Erd\"{o}s \cite{Chung-Erdos-52:MR0045327}  in the early  fifties explored the implications of imposing condition \eqref{pairwiseSV} on the overlaps.
Within the specific number theoretic setting,  Duffin $\&$ Schaeffer \cite{DuffinSchaeffer} had carried out such an investigation in 1941 and it was a key ingredient in their proof  of what today is refereed to as the Duffin-Schaeffer Theorem \cite[Theorem I]{DuffinSchaeffer}. This theorem is a special case of the notorious Duffin-Schaeffer Conjecture mentioned right  at the start of this paper.
\end{rem}

\medskip

\begin{rem}  \label{0-1law}
Condition \eqref{vbx1x} is often refereed to as \emph{quasi-independence on average} and  together with the divergence of the measure sum guarantees that the associated  $\limsup$  set $E_{\infty}$ is of positive measure. It does not in general guarantee full measure. However, this is not an issue if we already know by some other means (such as Kolmogorov's theorem \cite[Theorems 4.5 \& 22.3]{bill} or ergodicity \cite[\S24]{bill}) that the $\limsup $ set  $E_{\infty}$ satisfies a \emph{zero-one law}; namely  that
\begin{equation*}
\mu(E_{\infty}) =   \   0 \quad\text{or} \quad 1.
\end{equation*}
Within the context of metric number theory, the existence of such a law for the $\limsup$ set of well approximable real numbers is due to Cassels \cite{Cassels-50:MR0036787} and Gallagher \cite{Gallagher-61:MR0133297} and it plays a key role in the recent proof of the Duffin-Schaeffer Conjecture \cite{maynard}.  For further details and higher dimensional generalisation of their zero-one laws see \cite{BHV13, BV08} and references within.
Alternatively, without the presence of a general zero-one law,  if we are willing to impose a little more structure on the probability space,   we can guarantee full measure if the measure sum diverges locally and  quasi-independence on average holds locally in the presence of an appropriate topological structure on $\Omega$.   In short, by locally we mean that the conditions under consideration  hold for $E_i  \cap A $ where $A$  is an arbitrary open set with positive measure. For the precise  statement see Lemma~LBC below.

\end{rem}

\medskip

In short, the purpose of the present paper is to determine whether or not Lemma~DBC  is best possible. In other words, is it the case that the \emph{pairwise quasi-independence on average} condition \eqref{vbx1x}  cannot be replaced by a weaker condition?    Recall,  that in view of Lemma~CBC, the divergence sum condition within Lemma~DBC is not negotiable -- it has to be present. We show that within a reasonably general framework,  given any  $\limsup$ set $E_{\infty}:=\limsup_{i \to \infty} E_i $ with $ \mu(E_{\infty}) > 0 $ the sets $E_i$ can be appropriately manipulated or rather ``trimmed''  in such a manner that the resulting  subsets $ E^*_i$ are quasi-independent on average and the sum of the measures  $  \mu(E^*_i) $ diverges. Thus, up to ``trimming'' the divergence Borel--Cantelli Lemma is best possible. Moreover, we show that quasi-independence on average for the trimmed sets is in fact not only equivalent to full measure  but to  three other useful  properties which are of independent interest especially within the context of applications.
We conclude the paper with a couple of examples that demonstrate the versatility and power of our results.

\subsection{Statement of results}

Throughout,  $(\Omega, \cA, \mu, d) $ will be a metric measure space equipped with a Borel  probability measure $\mu$. In what follows,  {$\supp \mu $ will denote the support of the measure $\mu$}
and   given $x\in\Omega$ and $r>0$, $B=B(x,r)$ will denote the ball centred at $x$ of radius $r$. {Also, given a real number $a > 0$, we denote by  $a
 B$  the ball $B$ scaled by a factor $a$; i.e.  $aB:= B(x, a r)$.}   Most of the time we will  assume that $\mu$ is doubling. Recall, that $\mu $ is said to be \emph{doubling}  if there are constants $ \lambda  \geq 1$ {and $r_0>0$} such that for any $x \in \supp\mu$ and {$0<r<r_0$}
\begin{equation}\label{doub}
\mu (B(x, 2r))  \, \leq \, \lambda \ \mu(B(x,r)) \, .
\end{equation}

\noindent The doubling condition allows us to blow up a given ball by a constant factor without drastically affecting its measure. The metric measure space $(\Omega, \cA, \mu, d) $ is also said to be  {\em doubling}\/ if $\mu$
is doubling \cite{jh}. Note that the doubling property is imposed only on the measure of balls centred in $\supp\mu$. However, in many instances the doubling property can be effectively used on balls that are not necessarily centred in $\supp\mu$ provided that they contain `enough' of the support.
In particular, when working with a given sequence of balls $ \{B_i\} $ in $\Omega$ we will often impose the following weaker version of doubling:
\begin{equation}\label{vb8}
\exists\; a,b>1\quad  {\rm such \ that} \quad \mu(aB_i)\le b\mu(B_i)\qquad \text{for all $i$ sufficiently large}.
\end{equation}
Condition \eqref{vb8}  is not particularly restrictive and  ensures that whenever $\mu(B_i)>0$ the support of $\mu$ within $B_i$ is not concentrated too close to the boundary of $B_i$.  Indeed,  if  for some $\ve>0$ the ball $(1-\ve)B_i$ contains points in $\supp \mu $, then the inequality in \eqref{vb8} holds with any $a>1$ and $b=\lambda^k$ where $k:=\left\lceil\log_2\frac{a+1-\ve}{\ve}\right\rceil$. Note that if the centre of $B_i$ is in $\supp \mu $ then the inequality in \eqref{vb8}  trivially holds with $a=2$ and $b=\lambda$.

Restricting our attention to $\limsup$ sets of balls, we have the following `if and only if'  statement for full measure.

\begin{theorem} \label{iffballs}
Let $(\Omega, \cA, \mu, d) $ be a metric measure space equipped with a doubling Borel  probability measure $\mu$. Let $\{B_i\}_{i\in\N}$ be a sequence of balls in $\Omega$ with
$\diam(B_i)\to 0$ as $i\to\infty$ and such that \eqref{vb8} holds.
Let $E_{\infty}:=\limsup_{i \to \infty} B_i  \,   $.
Then
$$
\mu(E_{\infty}) =1\,
$$
if and only if there exists a constant $C>0$ such that for any ball $B$ centred in $\supp\mu$ there is a  sub-sequence $\{L_{i,B}\}_{i\in\N} $ of  $\{B_i\}_{i\in\N}$  of balls contained in $B$ $($i.e. $L_{i,B}\subset B$ for all $i)$, such that
\begin{equation}\label{onemore1}
\sum_{i=1}^{\infty} \mu(L_{i,B}) =\infty
\end{equation}
and for infinitely many $Q \in \N$
    \begin{equation} \label{onemore2}
     \sum_{s,t=1}^Q  \mu\big(L_{s,B}\cap L_{t,B} \big) \ \le \  \frac{C}{\mu(B)}  \,  \left(\sum_{s=1}^Q  \mu(L_{s,B})\right)^2 \ \, .
   \end{equation}
\end{theorem}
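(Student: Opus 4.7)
\emph{Proof proposal.} The two implications require different techniques. For sufficiency I would apply Lemma~DBC locally on each ball $B$ and then invoke the Lebesgue density theorem for doubling measures. For necessity I would construct $\{L_{i,B}\}$ by an iterated disjoint-covering argument built on the classical $5r$-covering theorem and the doubling property of $\mu$.

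For sufficiency, fix a ball $B$ centred in $\supp\mu$. The hypothesis \eqref{onemore2} is precisely the quasi-independence-on-average condition \eqref{vbx1x} applied to $\{L_{i,B}\}$ with constant $C/\mu(B)$, so Lemma~DBC gives $\mu(\limsup_{i} L_{i,B}) \ge \mu(B)/C$. Since each $L_{i,B}\subset B$ and $\{L_{i,B}\}$ is a subsequence of $\{B_i\}$, this produces the uniform lower density $\mu(E_\infty\cap B)\ge\mu(B)/C$ for every ball $B$ centred in $\supp\mu$. Because $\mu$ is doubling, the Lebesgue density theorem applies: if $\mu(E_\infty)<\mu(\supp\mu)=1$, then $\mu$-a.e. $x\in\supp\mu\setminus E_\infty$ would satisfy $\mu(E_\infty\cap B(x,r))/\mu(B(x,r))\to 0$, contradicting the uniform density lower bound just established. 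Hence $\mu(E_\infty)=1$.

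For necessity, fix $B=B(x_0,r)$ with $x_0\in\supp\mu$ and assume $\mu(E_\infty)=1$. A triangle-inequality argument using $\diam(B_i)\to 0$ shows that the family $\mathcal{B}:=\{B_i:B_i\subset B\}$ forms a fine cover of $\mu$-a.e. point of $B$ (for every $\ve>0$ any $B_i\ni x$ of sufficiently small diameter is automatically inside $B$). I then iterate: setting $\mathcal{B}_1:=\mathcal{B}$, for each $K\ge 1$ the $5r$-covering theorem applied to $\{B\in\mathcal{B}_K:\diam(B)\le 2^{-K}\}$ produces a finite disjoint subfamily $\mathcal{F}_K$ whose 5-dilations cover at least a $(1-2^{-K})$-fraction of $B$, and by doubling $\sum_{B\in\mathcal{F}_K}\mu(B)\ge c_0(1-2^{-K})\mu(B)$ with $c_0>0$ depending only on the doubling constants. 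Setting $\mathcal{B}_{K+1}:=\mathcal{B}_K\setminus\mathcal{F}_K$ preserves the fine-cover hypothesis, since each $x\in E_\infty\cap B$ meets infinitely many $B_i$ while at most one is removed per stage; hence the iteration continues indefinitely. Enumerating $\bigcup_K\mathcal{F}_K$ in the original index order yields the desired subsequence $\{L_{i,B}\}$ of $\{B_i\}$. At the stopping times $Q_K:=|\mathcal{F}_1|+\cdots+|\mathcal{F}_K|$, disjointness within each $\mathcal{F}_j$ and the bound $\sum_{B\in\mathcal{F}_j,\,B'\in\mathcal{F}_{j'}}\mu(B\cap B')\le\mu(B)$ for $j\ne j'$ combine to give
$$\sum_{i\le Q_K}\mu(L_{i,B})\;\ge\;c_0(K-1)\mu(B),\qquad \sum_{s,t\le Q_K}\mu(L_{s,B}\cap L_{t,B})\;\le\;K^2\mu(B),$$
which establishes both \eqref{onemore1} and \eqref{onemore2} with a universal constant $C$ depending only on $c_0$.

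The principal obstacle is the necessity construction. One must (a) verify persistence of the fine-cover hypothesis under successive removals, (b) apply the $5r$-covering theorem and doubling inequalities to balls whose centres may lie outside $\supp\mu$ — precisely where the weak doubling hypothesis \eqref{vb8} enters, since it supplies the inequality $\mu(5B_i)\le b'\mu(B_i)$ with a uniform constant — and (c) ensure that $c_0$ is independent of $B$, which follows from the global doubling parameters of $\mu$ and the sequence. Once these points are settled, the final ratio estimate is a routine Cauchy--Schwarz calculation.
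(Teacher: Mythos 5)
Your sufficiency argument is sound and essentially mirrors the paper's: the paper packages the local application of Lemma~DBC together with a doubling density argument into Lemma~LBC and Lemma~\ref{snowya}, while you invoke the Lebesgue density theorem directly, but these are the same idea.

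The necessity direction, however, has a genuine gap in the stopping-time bookkeeping. You build disjoint families $\cF_1,\cF_2,\dots$ by iterated $5r$-covering with a \emph{diameter} threshold $2^{-K}$, then re-enumerate $\bigcup_K\cF_K$ in the \emph{original} index order and claim estimates at $Q_K:=|\cF_1|+\dots+|\cF_K|$. But nothing in your construction forces the indices of balls in $\cF_{K+1}$ to exceed those in $\cF_K$: since $\diam(B_i)\to 0$ need not be monotone, a low-indexed ball can have very small diameter and be selected at a late stage $K$. Consequently the first $Q_K$ elements of the re-ordered subsequence are generally \emph{not} $\cF_1\cup\dots\cup\cF_K$; they are the $Q_K$ lowest-indexed balls in $\bigcup_j\cF_j$, which may touch arbitrarily many families $\cF_j$ with $j>K$ while omitting parts of $\cF_1,\dots,\cF_K$. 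Your measure lower bound $\sum_{i\le Q_K}\mu(L_{i,B})\ge c_0(K-1)\mu(B)$ and overlap upper bound $\le K^2\mu(B)$ both rely on the false identification, so neither is justified.

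The fix is precisely what the paper's Proposition~\ref{sv2021} builds in through the parameter $G$: extract each stage from a tail $\{B_i:i\ge G\}$ of the sequence (which, by $\diam(B_i)\to 0$, automatically enforces small diameter), and after stage $n$ set $G_{n+1}$ to be one more than the largest index appearing in $\cK_{G_n,B}$. Then the families occupy disjoint, increasing index ranges, the original-index re-enumeration concatenates them in order, and the partial sums decouple exactly as you intended. Your Cauchy--Schwarz-type estimate is then valid verbatim, and your observations (a)--(c) about persistence, the role of \eqref{vb8} via Lemma~\ref{lem_vb} for $5$-dilations of balls not centred in $\supp\mu$, and the uniformity of the constant are all handled correctly in that framework. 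In short: keep your iteration, but index-gate it rather than diameter-gate it.
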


\bigskip

It is important to note that the constant $ C>0$ appearing in \eqref{onemore2} is independent of the arbitrary ball $B$.
The following is a  strengthening of Theorem~\ref{iffballs} to $\limsup$ sets of open sets.  As we shall see, the proof will follow the same line of argument as that of Theorem~\ref{iffballs}.

\begin{theorem} \label{opensets}
Let $(\Omega, \cA, \mu, d) $ be a metric measure space equipped with a doubling Borel  probability measure $\mu$. Let $\{E_i\}_{i\in\N}$ be a sequence of open subsets (events)  in $\Omega$  and let $E_{\infty}:=\limsup_{i \to \infty} E_i  \,   $.
Then
$$
\mu(E_{\infty}) =1\,
$$
if and only if there exists a constant $C>0$ such that for any ball $B$ centred in $\supp\mu$ there is a sequence $\{L_{i,B}\}_{i\in\N}$ of finite unions of disjoint balls centred in $\supp\mu$ with $L_{i,B}\subset E_i\cap B$ satisfying \eqref{onemore1} and \eqref{onemore2} for infinitely many $Q \in \N$.
\end{theorem}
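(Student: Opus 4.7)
The argument falls into two directions: the easy sufficient direction (a localised divergence Borel--Cantelli estimate followed by Lebesgue differentiation), and the harder necessary direction (interior ball approximation reducing the problem to Theorem~\ref{iffballs}).

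\textbf{Sufficiency.} Assume the family $\{L_{i,B}\}$ exists for every ball $B$ centred in $\supp\mu$. Fix such a $B$ and pass to the probability space $(B,\mu(\cdot)/\mu(B))$. In this rescaled space \eqref{onemore1} asserts divergence of the measure sum, while dividing \eqref{onemore2} through by $\mu(B)^2$ produces the quasi-independence on average estimate \eqref{vbx1x} with constant $C$. Lemma~DBC then yields
\[
\mu(E_\infty\cap B)\ \ge\ \mu\bigl(\textstyle\limsup_i L_{i,B}\bigr)\ \ge\ \mu(B)/C.
\]
Since the same $C$ works uniformly in $B$, the Lebesgue differentiation theorem for doubling Borel measures forces $\mu(E_\infty)=1$: at a density point of $\Omega\setminus E_\infty$ the density should tend to one, contradicting the uniform bound $\mu((\Omega\setminus E_\infty)\cap B(x,r))/\mu(B(x,r))\le 1-1/C$.

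\textbf{Necessity.} Suppose $\mu(E_\infty)=1$ and fix a ball $B$ centred in $\supp\mu$. The idea is to reduce to Theorem~\ref{iffballs} by approximating each open set from inside by a finite disjoint union of balls. A Vitali-type covering argument, valid in any doubling metric measure space, provides for each $i$ a countable pairwise-disjoint family of balls of diameter at most $1/i$ centred in $\supp\mu$ and contained in $E_i\cap B$, whose union has the same $\mu$-measure as $E_i\cap B$. Truncate it to a finite sub-union $F_i\subset E_i\cap B$ with $\mu((E_i\cap B)\setminus F_i)\le 2^{-i}$. Lemma~CBC applied to the error sets then gives $\mu(\limsup_i F_i)=\mu(E_\infty\cap B)=\mu(B)$. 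Each $F_i$ is a finite disjoint union of balls centred in $\supp\mu$ of vanishing diameter, and every such ball automatically satisfies \eqref{vb8} with $a=2$, $b=\lambda$. Now mimic the necessity argument of Theorem~\ref{iffballs} with each single ball $B_i$ there replaced by the finite union $F_i$: perform the same trimming procedure to extract from each $F_i$ a sub-union $L_{i,B}\subset F_i$ of balls such that $\{L_{i,B}\}$ has divergent measure sum and obeys \eqref{onemore2} with the universal constant $C$ supplied by Theorem~\ref{iffballs}.

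\textbf{Main obstacle.} The non-trivial point is to confirm that the trimming argument used for Theorem~\ref{iffballs} goes through when each ball $B_i$ is replaced by the finite disjoint union $F_i$. This hinges on two clean decompositions of the overlap quantities. Denoting the constituent balls of $L_{s,B}\subset F_s$ by $A^{(s)}_\alpha$, pairwise disjointness within $F_s$ gives $\mu(L_{s,B})=\sum_\alpha\mu(A^{(s)}_\alpha)$, while for $s\ne t$
\[
\mu(L_{s,B}\cap L_{t,B})\ =\ \sum_{\alpha,\beta}\mu\bigl(A^{(s)}_\alpha\cap A^{(t)}_\beta\bigr),
\]
so the left-hand side of \eqref{onemore2} for $\{L_{i,B}\}$ unfolds into a double sum of pairwise ball-intersections of precisely the kind controlled in the proof of Theorem~\ref{iffballs}. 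Carrying out the trimming on the underlying balls $\{A^{(i)}_\alpha\}$, organised in blocks indexed by $i$ and restricted to lie in $B$, and then regrouping the selected balls by their parent index $i$, produces the required sequence $\{L_{i,B}\}$. The only book-keeping subtlety is to align the infinitely many $Q$'s at which \eqref{onemore2} is verified for the underlying ball sequence with block-complete values of $Q$, so that regrouping preserves \eqref{onemore2} (possibly after inflating $C$ by a universal factor); this is arranged by exploiting the monotonicity of the partial sums and the fact that each block $F_i$ is finite.
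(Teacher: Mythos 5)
Your argument is correct in substance, and while it follows the paper's general scheme (5$r$ covering plus the trimming combinatorics of Proposition~\ref{sv2021}), the necessity step takes a genuinely different route. The paper works with the open sets directly: for $\cF:=\{B(x)\subset E_i\cap B:x\in B\cap E_i\cap\supp\mu,\ i\ge G\}$ a single application of the $5r$ covering lemma already produces a finite disjoint sub-collection $\kgb$ with $\mu\big(\bigcup_{L\in\kgb}L\big)\geq\frac{1}{2\lambda^3}\mu(B)$; this is the analogue of statement (C), and Steps~3--4 of Proposition~\ref{sv2021} then run verbatim, with balls regrouped by parent index $i$ at the end. You instead insert a preliminary Vitali inner approximation $F_i\subset E_i\cap B$ and a Lemma~CBC argument to discard the error, thereby reducing to the genuine ball setting of Theorem~\ref{iffballs}. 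That reduction is clean and conceptually modular (it makes the "balls are the hard case" slogan literal), at the cost of a second covering argument and slightly worse constants (via \eqref{vb8} with $a=2,b=\lambda$ rather than the direct $\lambda^3$). For sufficiency you reprove by hand what Lemma~LBC (via Lemma~\ref{snowya}) abstracts: Lemma~DBC inside $\mu_B:=\mu(\cdot\cap B)/\mu(B)$ gives $\mu(E_\infty\cap B)\ge\mu(B)/C$, and a Lebesgue-density argument finishes; this is the same idea, just unpackaged.

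Two points in your necessity argument would need to be tightened. First, you cannot literally invoke Theorem~\ref{iffballs} on the sequence $\{A^{(i)}_\alpha\}$, because its $\limsup$ has full measure only inside $B$, not in $\Omega$; you correctly say ``mimic'' rather than ``apply'', but you should make explicit that the only consequence of $\mu(E_\infty)=1$ actually used in (B)$\Rightarrow$(C) is an estimate of the form $\mu\big(\tfrac12B'\cap\limsup\big)=\mu(\tfrac12B')$ for sub-balls $B'\subset B$, which your set-up does provide. Second, the book-keeping issue you flag at the end is real but the fix is not to inflate $C$: if you choose the cut-off indices $G_{m+1}$ by the largest \emph{parent} index $i$ present in $\cK_{G_m,B}$ (rather than the largest position in the enumeration of the constituent balls), then the $\cK_{G_m,B}$ draw balls from pairwise disjoint blocks of $i$'s, each regrouped $L_{i,B}$ is automatically a disjoint union of balls of $F_i$, and the values $Q_M$ are block-complete so that \eqref{onemore2} transfers without any loss of constant. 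This is precisely what makes the paper's ``group together in an obvious manner'' step work.
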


\bigskip

The upshot is that for a $\limsup$ set $E_{\infty} $ to have full measure we must  be able to locally ``trim'' the associated sets $E_i$ so that the  resulting trimmed subsets are  quasi-independent on average and the sum of their measures  diverges.

It turns outs that the sufficiency part of the  Theorem~\ref{opensets}  can be made more general. In particular, the doubling condition can be altogether dropped.
The following is a
local variant of the (standard) divergence Borel--Cantelli Lemma which allows us to deduce full measure rather than just positive measure.

\begin{lemmaDBClocal}
Let $(\Omega, \cA, \mu, d) $ be a metric measure space equipped with a Borel  probability measure $\mu$  and  let $\{E_i\}_{i \in \N} $ be a sequence of Borel subsets of $\Omega$.  Suppose there exists an increasing function $f:(0,+\infty)\to(0,+\infty)$ with $f(x)\to0$ as $x\to0$ such that for any open set $A $ with $\mu(A) > 0$
 there is  a sequence $\{L_{i,A}\}_{i\in\N} $ of measurable subsets of $A$ such that
 \begin{equation}\label{elite}
\sum_{i=1}^{\infty} \mu(L_{i,A}) =\infty
\end{equation}
and for infinitely many $Q \in \N$
    \begin{equation}\label{vbx1xslvA}
     \sum_{s,t=1}^Q  \mu\big(L_{s,A}\cap L_{t,A} \big) \ \le \  \frac{1}{f(\mu(A))}  \,  \left(\sum_{s=1}^Q  \mu(L_{s,A})\right)^2 \ \, .
\end{equation}
Then
$$
\mu(E_{\infty}) =1\,.
$$
Moreover, if in addition  $\mu$ is doubling and $f(x)=cx$ for some constant $0<c \le 1$, it suffices to take  $A$ in the above to be an arbitrary ball of sufficiently small radius centred in $\supp\mu$.
\end{lemmaDBClocal}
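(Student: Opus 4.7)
The plan is to combine Lemma~DBC with a measure-theoretic regularity argument. First, for each open set $A$ with $\mu(A)>0$, I would apply Lemma~DBC to the sequence $\{L_{i,A}\}$ with constant $C=1/f(\mu(A))$: conditions \eqref{elite} and \eqref{vbx1xslvA} are exactly what Lemma~DBC requires, so $\mu(\limsup_i L_{i,A})\ge f(\mu(A))$. Reading the hypothesis in its natural form $L_{i,A}\subset E_i\cap A$ (as in Theorem~\ref{opensets}, without which the hypothesis would say nothing about $E_{\infty}$), one has $\limsup_i L_{i,A}\subset E_{\infty}\cap A$, and hence the local lower bound $\mu(E_{\infty}\cap A)\ge f(\mu(A))$ holds for every open $A$ with $\mu(A)>0$.

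Next I would argue by contradiction. Suppose $\mu(E_{\infty})<1$ and set $F:=\Omega\setminus E_{\infty}$, so $\mu(F)>0$. By outer regularity of the Borel probability measure $\mu$ on the metric space $\Omega$, for any $\delta>0$ there is an open $A\supset F$ with $\mu(A)\le\mu(F)+\delta$. Choosing $0<\delta<f(\mu(F))$ and using the monotonicity of $f$ together with $\mu(A)\ge\mu(F)$, one obtains $\mu(E_{\infty}\cap A)=\mu(A)-\mu(F)\le\delta<f(\mu(F))\le f(\mu(A))$, contradicting the local lower bound above. Therefore $\mu(E_{\infty})=1$.

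For the \emph{moreover} part, the hypothesis is only available for balls $B$ centred in $\supp\mu$ of sufficiently small radius, so the blunt use of outer regularity (which produced an arbitrary open set) is no longer available. Instead, the plan is to replace it by a density-point argument. Since $\mu$ is doubling, Lebesgue's differentiation theorem is valid for $\mu$ via a standard Vitali covering argument, and hence $\mu$-almost every point of any Borel set of positive measure is a density point of that set. If $\mu(F)>0$ then $\mu(F\cap\supp\mu)>0$, and one can pick a density point $x\in F\cap\supp\mu$ of $F$. For any $\ve>0$ there exists an $r>0$, small enough both for the hypothesis to apply and for $\mu(E_{\infty}\cap B)<\ve\mu(B)$ to hold where $B=B(x,r)$; taking $\ve=c/2$ contradicts the ball-version of the first step, namely $\mu(E_{\infty}\cap B)\ge c\mu(B)=f(\mu(B))$.

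The combinatorial substance of the statement is entirely inherited from Lemma~DBC, and the rest is a short measure-theoretic wrapper. I expect the \emph{moreover} part to be the only delicate one in practice, since it requires explicitly invoking Lebesgue differentiation for a general doubling Borel measure on an abstract metric space and ensuring that the density point can be chosen inside $\supp\mu$; both facts are standard but should be spelled out. The outer regularity of $\mu$ used in the main part is likewise standard for finite Borel measures on a metric space, but deserves an explicit mention.
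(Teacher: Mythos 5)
Your proof is correct, and the first step (applying Lemma~DBC locally to obtain $\mu(E_\infty\cap A)\ge f(\mu(A))$, resp.\ $\mu(E_\infty\cap B)\ge c\,\mu(B)$) is exactly the paper's first step. Where you diverge is in the ``measure-theoretic wrapper'' that converts these local lower bounds into $\mu(E_\infty)=1$. The paper simply quotes two black-box lemmas from \cite{BDV06}: Lemma~\ref{MEMLEM6} (Lemma~6 of \cite{BDV06}) for the open-set version, and Lemma~\ref{snowya} (an extension of Proposition~1 of \cite{BDV06}) for the ball/doubling version. You instead re-derive these facts inline: an outer-regularity plus contradiction argument (approximate $F=\Omega\setminus E_\infty$ from outside by an open $A$ with $\mu(A)-\mu(F)<f(\mu(F))$, and use monotonicity of $f$) in place of Lemma~\ref{MEMLEM6}, and a Lebesgue density-point argument (pick a density point of $F$ in $\supp\mu$, which exists $\mu$-a.e.\ by the Vitali/differentiation theory for doubling measures) in place of Lemma~\ref{snowya}. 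Both are valid and are essentially the standard proofs of those cited lemmas, so the substance is the same but your version is self-contained where the paper is modular. Two minor things worth making explicit: (i) for the moreover part you implicitly need $\mu(\Omega\setminus\supp\mu)=0$ so that $\mu(F\cap\supp\mu)>0$; and (ii) you correctly notice that the statement never actually asserts $L_{i,A}\subset E_i\cap A$ and read it in (which is what the paper's own proof also does, declaring that inclusion to hold ``by definition''), so that repair is not a gap on your side but an imprecision in the statement itself.
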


\medskip

\begin{rem}
In the case $f(x)=cx$ for some constant $0<c \le  1$ and $A=B$ is a ball, condition \eqref{vbx1xslvA} becomes the same as \eqref{onemore2} with $C=c^{-1}$.
Given a measurable set $A$ with $\mu(A)>0$, let $\mu_A$ denote the conditional probability measure given by
$$
  \mu_A(E):=\frac{\mu(E\cap A)}{\mu(A)} \qquad\text{for $E\in\cA$}\, .
$$
In other words,  $\mu_A$ is the re-normalised $\mu$-measure restricted to $A$.  Then it is easily seen that on replacing $\mu $ by $\mu_A$,  the divergence condition \eqref{elite} and  the overlap condition \eqref{vbx1xslvA} with $f(x)=cx$ coincide with those of Lemma~DBC. For obvious reasons, the independence condition \eqref{vbx1xslvA} with $f(x)=cx$
is often refereed to as \emph{local quasi-independence on average}.
\end{rem}

\bigskip

Theorem~\ref{iffballs} will follow from a more general statement that provides three more necessary and sufficient conditions for full measure. To be more precise, Theorem~\ref{iffballs} is the equivalence between (A) and (E) within the following statement with $C=\kappa^{-2}$.

\noindent Throughout,  we use the standard notation $\bigcup\limits^\circ$ to denote that the union of sets under consideration is disjoint.

\begin{proposition} \label{sv2021}
Let $(\Omega, \cA, \mu, d) $ be a metric measure space equipped with a doubling Borel  probability measure $\mu$. Let $\{B_i\}_{i\in\N}$ be a sequence of balls in $\Omega$ with
$\diam(B_i)\to 0$ as $i\to\infty$ and such that \eqref{vb8} holds.  Let $E_{\infty}:=\limsup_{i \to \infty} B_i  \,   $. Then, the following statements are equivalent:

\begin{itemize}
  \item[\rm \textbf{(A)}\ ]   $\mu (E_{\infty}) =1 $.
  \item[\rm \textbf{(B)}\ ]   For any ball
$B$  in $\Omega$, we have that
\begin{equation}\label{e:011}
\mu (E_{\infty} \cap B) =  \mu(B) \, .
\end{equation}
 \item[\rm \textbf{(C)}\ ]   For any ball $B$ in $\Omega$ centred in $\supp\mu$ and any $G \in \N$, there is a finite
sub-collection  $\kgb\subset\{B_i\,:\,i\ge G\}$ of disjoint balls contained in $B$  such that
\begin{equation}\label{e:014}
\mu\Big(\bigcup^\circ_{L\in \kgb}\!\!L\Big) \ \ge \ \kappa
 \ \mu(B) \,
 \hspace{8mm}\text{where}\qquad \kappa :=  \frac12 \;\frac{1}{\lambda^{k+1}b} \ \,
\end{equation}
where $\lambda$ is as in \eqref{doub}, $k:=\max\{1,\left\lceil\log_2\frac{6}{a-1}\right\rceil\}$ and $a,b$ are as in \eqref{vb8}.
\item[\rm \textbf{(D)}\ ]   For any ball $B$ in $\Omega$ centred in $\supp\mu$  and any $G\in \N$, there is a subset $ \egb \subseteq B$ consisting of a finite union of disjoint balls from  $\{B_i:i\ge G\}$,  such that
$$  \sum_{i=1}^{\infty} \mu(E_{G_i,B}) =\infty  \, $$
for any subsequence $(G_i)_{i\in\N}$ of natural numbers, and, with $\kappa$ is as in \eqref{e:014}, for any pair of natural numbers $G$ and $G'$
   \begin{equation}\label{snow1}
     \mu\big(\egb \cap E_{G',B} \big) \ \le \  \frac{1}{\mu(B) \; \kappa^2}  \ \ \mu(\egb)  \, \mu(E_{G',B})  \, .
   \end{equation}

\item[\rm \textbf{(E)}\ ]   For any ball $B$ in $\Omega$ centred in $\supp\mu$ there is a  sub-sequence  $\{L_{i,B}\}_{i\in\N} $ of  $\{B_i\}_{i\in\N}$ of balls contained in $B$  such that
$$  \sum_{i=1}^{\infty} \mu(L_{i,B}) =\infty  \, $$
and, with $\kappa$ is as in \eqref{e:014}, for infinitely many $Q \in \N$
   \begin{equation}\label{snow2}
     \sum_{s,t=1}^Q  \mu\big(L_{s,B}\cap L_{t,B} \big) \ \le \  \frac{1}{\mu(B) \; \kappa^2}  \,  \left(\sum_{s=1}^Q  \mu(L_{s,B})\right)^2  \, .
    \end{equation}

\end{itemize}

\end{proposition}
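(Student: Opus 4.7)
The plan is to establish the cycle
(A) $\Rightarrow$ (B) $\Rightarrow$ (C) $\Rightarrow$ (D) $\Rightarrow$ (E) $\Rightarrow$ (A).
The technical heart is the step (B) $\Rightarrow$ (C), which demands a Vitali-type extraction subject to the weak regularity condition \eqref{vb8}; the remaining links are either immediate consequences of the definitions or a direct application of Lemma~LBC.

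The implication (A) $\Rightarrow$ (B) is trivial: $\mu(E_\infty)=1$ forces $\mu(B\setminus E_\infty)=0$ for every ball $B$. For (B) $\Rightarrow$ (C), fix $B$ centred in $\supp\mu$ and $G\in\N$. Since $E_\infty=\limsup_{i\ge G} B_i$ with $\diam(B_i)\to 0$, after a slight inward perturbation of $B$ (whose relative $\mu$-cost is controlled by \eqref{doub}) the collection $\{B_i:i\ge G,\,B_i\subset B\}$ covers a subset of $B$ of $\mu$-measure at least $\tfrac12\mu(B)$. Extract a finite sub-cover, and apply the finite $5r$-Vitali covering lemma to produce a disjoint sub-collection $\kgb$ whose $5$-fold blow-ups still cover this subset. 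Controlling the measure of each $5$-blow-up by a factor of $\lambda^k b$ of the measure of the original ball --- via one invocation of \eqref{vb8} to enlarge by factor $a$, followed by $k$ iterations of \eqref{doub} on a containing ball centred in $\supp\mu$ (with $k$ chosen so that $2^k(a-1)\ge 6$, ensuring that the combined dilation dominates the $5r$-blow-up together with the margin needed for containment) --- yields \eqref{e:014} with $\kappa=\tfrac12\lambda^{-(k+1)}b^{-1}$.

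For (C) $\Rightarrow$ (D), set $\egb:=\bigcup_{L\in\kgb} L$. Then $\mu(\egb)\ge\kappa\mu(B)$, so $\sum_G\mu(\egb)=\infty$; moreover \eqref{snow1} is automatic, because its right-hand side is $\ge \mu(B)\ge\mu(\egb\cap E_{G',B})$. For (D) $\Rightarrow$ (E), concatenate the disjoint balls constituting $E_{1,B},E_{2,B},\ldots$ into a sequence $\{L_{i,B}\}$ drawn from $\{B_i\}$; for $Q$ that completes the first $M$ families, disjointness within each $\egb$ gives
\[
\sum_{s,t=1}^Q \mu(L_{s,B}\cap L_{t,B}) \;=\; \sum_{G,G'=1}^M \mu(\egb\cap E_{G',B}),
\]
so \eqref{snow1} transfers immediately to \eqref{snow2}. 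Finally, (E) $\Rightarrow$ (A) is Lemma~LBC applied with $f(x)=\kappa^2 x$: the hypotheses \eqref{onemore1} and \eqref{snow2} are exactly \eqref{elite} and \eqref{vbx1xslvA} in that lemma.

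The principal difficulty lies in the quantitative Vitali extraction in (B) $\Rightarrow$ (C): one must combine \eqref{doub} and \eqref{vb8} with a controlled inward shift of $B$ to guarantee $B_i\subset B$, while producing a single constant $\kappa$ independent of $B$ and $G$. Once this step is carried out with the explicit constants $\lambda,a,b,k$, the remainder of the proposition --- and hence Theorem~\ref{iffballs}, which is the equivalence (A) $\Leftrightarrow$ (E) with $C=\kappa^{-2}$ --- follows by the essentially formal bookkeeping outlined above.
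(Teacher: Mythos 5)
Your cycle (A) $\Rightarrow$ (B) $\Rightarrow$ (C) $\Rightarrow$ (D) $\Rightarrow$ (E) $\Rightarrow$ (A) follows the paper's, and the implications (A) $\Rightarrow$ (B), (C) $\Rightarrow$ (D), and (E) $\Rightarrow$ (A) are handled essentially as in the paper. In (B) $\Rightarrow$ (C) you carry out the Vitali extraction in the opposite order --- first thin to a finite sub-family (by monotone convergence of measure), then apply the $5r$-lemma --- whereas the paper applies the $5r$-lemma first to obtain a possibly infinite disjoint family $\cG$ and then discards a tail; both orders work. However, the measure lower bound you quote for the union of $\{B_i : i\ge G,\ B_i\subset B\}$ should be $\lambda^{-1}\mu(B)$, coming from $\mu(\tfrac12 B)\ge\lambda^{-1}\mu(B)$, rather than $\tfrac12\mu(B)$; the factor $\tfrac12$ in $\kappa=\tfrac12\lambda^{-(k+1)}b^{-1}$ is contributed by the finite-thinning step, not by the inward shift.

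The step where your argument has a genuine gap is (D) $\Rightarrow$ (E). You ``concatenate the disjoint balls constituting $E_{1,B},E_{2,B},\ldots$ into a sequence drawn from $\{B_i\}$,'' but (D) as stated places \emph{no} restriction on which indices $i$ contribute a ball $B_i$ to $\egb$, so a given ball $B_j$ may occur in $\egb$ for several distinct $G$. In that case the concatenation has repeated terms, hence is not a sub-sequence of $\{B_i\}$, and (E) does not follow. The paper resolves this by additionally invoking (C), whose collection $\kgb$ is explicitly drawn from $\{B_i : i\ge G\}$: one chooses $G_1<G_2<\cdots$ inductively, taking $G_{n+1}$ to be one more than the largest index occurring in $\cK_{G_n,B}$, so that the index ranges of the families $\cK_{G_n,B}$ are pairwise disjoint and the merged list is genuinely a sub-sequence of $\{B_i\}$. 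Thus the implication actually needed (and which the paper proves) is (C) $\&$ (D) $\Rightarrow$ (E); this is still available within your cycle, but your write-up never makes the invocation, and without it the step as written is incomplete.
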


\bigskip

\begin{rem} It will become apparent in the proof that we can take the subset $ \egb $ in (D) to be the union of balls in the sub-collection $\kgb$ associated with (C).
\end{rem}

\medskip

We now turn our attention to an `if and only if' statement for positive measure  for $\limsup$ sets of balls.

\begin{theorem} \label{iffballspositive}
Let $(\Omega, \cA, \mu, d) $ be a metric measure space equipped with a doubling Borel  probability measure $\mu$. Let $\{B_i\}_{i\in\N}$ be a sequence of balls in $\Omega$
such that \eqref{vb8} holds.
Let $E_{\infty}:=\limsup_{i \to \infty} B_i  \,   $.
Then
$$
\mu(E_{\infty}) > 0 \,
$$
if and only if there exists a  sub-sequence $\{L_{i}\}_{i\in\N} $ of  $\{B_i\}_{i\in\N}$ and
a constant $C>0$  such that
\begin{equation}\label{vb30}
\sum_{i=1}^{\infty} \mu(L_{i}) =\infty
\end{equation}
and for infinitely many $Q \in \N$
\begin{equation}\label{vb31}
     \sum_{s,t=1}^Q  \mu\big(L_{s}\cap L_{t} \big) \ \le \  C \,  \left(\sum_{s=1}^Q  \mu(L_{s})\right)^2 \ \, .
\end{equation}
\end{theorem}

\bigskip

The following is the `positive measure' analogue of Proposition~\ref{sv2021} and it clearly implies Theorem~\ref{iffballspositive}.

\begin{proposition} \label{sv2021positive}
Let $(\Omega, \cA, \mu, d) $ be a metric measure space equipped with a doubling Borel  probability measure $\mu$. Let $\{B_i\}_{i\in\N}$ be a sequence of balls in $\Omega$
such that \eqref{vb8} holds. Let $E_{\infty}:=\limsup_{i \to \infty} B_i  \,   $. Then, the following statements are equivalent:

\begin{itemize}
  \item[\rm \textbf{(A)}\ ]   $\mu (E_{\infty}) >0 $.
 \item[\rm \textbf{(B)}\ ]   For any $G \in \N$, there is a finite
sub-collection  $\cK_G\subset\{B_i\,:\,i\ge G\}$ of disjoint balls  such that
\begin{equation}\label{e:014snow}
\mu\Big(\bigcup^\circ_{L\in \cK_G}\!\!L\Big) \ \ge \ \kappa
  \,
 \hspace{8mm}\text{where}\qquad  \kappa :=  \frac{1}{2\lambda^{k+1}b}  \ \mu \big(E_{\infty}\big) \,,
\end{equation}
where $\lambda$ is as in \eqref{doub}, $k:=\max\{1,\left\lceil\log_2\frac{6}{a-1}\right\rceil\}$ and $a,b$ are as in \eqref{vb8}.

\item[\rm \textbf{(C)}\ ]   For any  $G\in \N$, there is a subset $ E_G$ of $\Omega$, which is a finite union of disjoint balls from  $\{B_i:i\ge G\}$,  such that
$$  \sum_{i=1}^{\infty} \mu(E_{G_i}) =\infty  \, $$
for any subsequence $(G_i)_{i\in\N}$ of natural numbers, and, with $\kappa$ is as in \eqref{e:014snow}, for any pair of natural numbers $G$ and $G'$
   \begin{equation}\label{snow1sv}
     \mu\big(E_G \cap E_{G'} \big) \ \le \  \kappa^{-2}  \ \ \mu(E_G)  \, \mu(E_{G'})  \, .
   \end{equation}

\item[\rm \textbf{(D)}\ ]  There is a  sub-sequence  $\{L_{i}\}_{i\in\N} $ of  $\{B_i\}_{i\in\N}$   such that
$$  \sum_{i=1}^{\infty} \mu(L_{i}) =\infty  \, $$
and, with $\kappa$ is as in \eqref{e:014snow}, for infinitely many $Q \in \N$
   \begin{equation}\label{snow2sv}
     \sum_{s,t=1}^Q  \mu\big(L_{s}\cap L_{t} \big) \ \le \   \kappa^{-2}  \,  \left(\sum_{s=1}^Q  \mu(L_{s})\right)^2 \,.
    \end{equation}

\end{itemize}

\end{proposition}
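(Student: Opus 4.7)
The plan is to prove the cycle of implications (A) $\Rightarrow$ (B) $\Rightarrow$ (C) $\Rightarrow$ (D) $\Rightarrow$ (A), in direct parallel with the argument for Proposition~\ref{sv2021}, but dispensing with the need to localise to an arbitrary ball $B$. The implication (D) $\Rightarrow$ (A) is immediate: applying the standard divergence Borel--Cantelli Lemma (Lemma~DBC) to the sub-sequence $\{L_i\}$ provided by (D) yields $\mu(\limsup_i L_i) \ge \kappa^2 > 0$, and since $\limsup_i L_i \subseteq \limsup_i B_i = E_\infty$, this delivers (A). The combinatorial implications (B) $\Rightarrow$ (C) $\Rightarrow$ (D) amount to little more than bookkeeping, so the main obstacle is the geometric step (A) $\Rightarrow$ (B).

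For (A) $\Rightarrow$ (B) the strategy is to start from $\mu(E_\infty) > 0$ and extract, for each $G \in \N$, a disjoint subfamily of $\{B_i : i \geq G\}$ whose union has measure at least $\kappa = \tfrac{1}{2\lambda^{k+1}b}\mu(E_\infty)$. First, since $E_\infty \subseteq \bigcup_{i \geq G} B_i$ and the partial unions $\bigcup_{i=G}^{T} B_i$ increase to the full union as $T \to \infty$, one may choose a finite $T \geq G$ with $\mu\big(\bigcup_{i=G}^{T} B_i\big) \geq \tfrac{1}{2}\mu(E_\infty)$. Next, apply a Vitali-type $cr$-covering lemma to the finite family $\{B_i : G \leq i \leq T\}$, with $c$ taken on the order of $\tfrac{6}{a-1}$, to produce a disjoint sub-collection $\cK_G$ whose scaled-up union $\bigcup_{B \in \cK_G} cB$ still covers $\bigcup_{i=G}^{T} B_i$. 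Since $c \leq 2^k$, iterating the doubling property \eqref{doub} $k$ times, together with one application of \eqref{vb8} to absorb the possible displacement of centres away from $\supp\mu$, yields $\mu(cB) \leq \lambda^{k+1}b\,\mu(B)$ for every $B \in \cK_G$. Chaining the two estimates gives
\[
\mu\Big(\bigcup^{\circ}_{B \in \cK_G}\!\! B\Big) \;\geq\; \frac{1}{\lambda^{k+1}b}\,\mu\Big(\bigcup_{i=G}^{T} B_i\Big) \;\geq\; \frac{\mu(E_\infty)}{2\lambda^{k+1}b} \;=\; \kappa,
\]
which is precisely (B). Making this covering argument rigorous, with the sharp constants stated in the proposition, is the technical heart of the proof.

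For (B) $\Rightarrow$ (C), set $E_G := \bigcup^{\circ}_{L \in \cK_G} L$ for each $G$, where $\cK_G$ is the sub-collection delivered by (B). Then $\mu(E_G) \geq \kappa$ for every $G$, so the divergence $\sum_{G} \mu(E_G) = \infty$ is automatic, while the quasi-independence condition \eqref{snow1sv} reduces to the crude bound $\mu(E_G \cap E_{G'}) \leq 1 \leq \kappa^{-2}\cdot\kappa\cdot\kappa \leq \kappa^{-2}\mu(E_G)\mu(E_{G'})$. For (C) $\Rightarrow$ (D), select an increasing sequence $G_1 < G_2 < \cdots$ with $G_{j+1}$ exceeding every index occurring in $E_{G_1},\ldots,E_{G_j}$ (possible since each $E_{G_j}$ is a finite union), and enumerate the balls appearing in $E_{G_1}, E_{G_2}, \ldots$ in this order to form the sub-sequence $\{L_i\}$ of $\{B_i\}$. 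Writing $Q_m$ for the total number of balls in $E_{G_1} \cup \cdots \cup E_{G_m}$, disjointness within each $E_{G_j}$ gives $\sum_{s=1}^{Q_m} \mu(L_s) = \sum_{j=1}^{m} \mu(E_{G_j})$ and $\sum_{s,t=1}^{Q_m} \mu(L_s \cap L_t) = \sum_{j,k=1}^{m} \mu(E_{G_j} \cap E_{G_k})$, so \eqref{snow1sv} translates directly into \eqref{snow2sv} along the infinite subsequence $\{Q_m\}$, closing the cycle.
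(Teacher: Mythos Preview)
Your proposal is correct and mirrors the paper's proof: the same cycle (A)$\Rightarrow$(B)$\Rightarrow$(C)$\Rightarrow$(D)$\Rightarrow$(A), with a Vitali-type covering plus doubling for (A)$\Rightarrow$(B), the trivial bound $\mu(E_G\cap E_{G'})\le 1\le\kappa^{-2}\mu(E_G)\mu(E_{G'})$ for (B)$\Rightarrow$(C), index bookkeeping for (C)$\Rightarrow$(D), and Lemma~DBC for (D)$\Rightarrow$(A). The only difference is cosmetic: the paper applies the $5r$ lemma to the full tail $\{B_i:i\ge G,\ B_i\cap\supp\mu\neq\varnothing\}$ first and then truncates to a finite subfamily via the tail of a convergent sum, whereas you truncate first via continuity of measure and cover second --- both routes give the stated $\kappa$, though to match the constant exactly you should use the standard $5r$ lemma combined with Lemma~\ref{lem_vb} (which is precisely your ``iterate doubling $k$ times plus one use of \eqref{vb8}'') rather than a bespoke $cr$-covering with $c\approx 6/(a-1)$.
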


\section{Proof of results \label{proofs}}

\subsection{Preliminaries}

We will make multiple use of the
 following basic covering lemma, see for example   \cite{jh, mat}.

\begin{lemma}[The $5r$ covering lemma]\label{5r}
Every family ${\cal F}$ of balls of uniformly bounded diameter in
a metric space $(\Omega,d)$ contains a disjoint subfamily ${\cal G} $
such that $$ \bigcup_{B \in {\cal F} } B \ \subset \ \bigcup_{B
\in {\cal G} } 5B \ \ \ . $$
\end{lemma}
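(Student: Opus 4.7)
The plan is to prove this classical Vitali-type covering lemma by a greedy procedure that picks balls in roughly decreasing order of size, using the uniform bound on diameters to partition $\cF$ into a countable sequence of size classes. Writing $R := \sup_{B\in\cF}\radius(B)$, which is finite by the hypothesis (under the standard convention that each ball comes with a prescribed centre and radius), I would set
\[
\cF_n \; := \; \{\,B\in\cF \,:\, R/2^{n+1} < \radius(B) \leq R/2^{n}\,\}, \qquad n=0,1,2,\ldots,
\]
so that $\cF = \bigcup_{n\geq 0}\cF_n$ (the trivial case $R=0$, in which $\cF$ consists of singletons, is handled by simply taking $\cG=\cF$).

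Next I would construct $\cG$ generation by generation. Let $\cG_0$ be a maximal disjoint sub-family of $\cF_0$; its existence is guaranteed by Zorn's lemma applied to the poset of disjoint sub-collections of $\cF_0$ ordered by inclusion. Having defined $\cG_0,\ldots,\cG_{n-1}$, let $\cG_n$ be a maximal disjoint sub-family of
\[
\{\, B\in\cF_n \,:\, B\cap B' = \emptyset \text{ for every } B'\in \cG_0\cup\cdots\cup\cG_{n-1}\,\},
\]
again by Zorn. Set $\cG := \bigcup_{n\geq 0}\cG_n$. By construction $\cG$ is a disjoint sub-family of $\cF$.

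The key verification is that every $B\in\cF$ is contained in $5B'$ for some $B'\in\cG$. Fix $B\in\cF_n$ with centre $x$ and radius $r := \radius(B)$. Maximality of $\cG_n$ inside its defining collection forces either $B\in\cG_n$, in which case we take $B'=B$, or $B$ to meet some $B'\in\cG_k$ with $k\leq n$. In the latter case, $\radius(B') > R/2^{k+1} \geq R/2^{n+1} \geq r/2$. Picking $y\in B\cap B'$ and writing $x'$ for the centre of $B'$, two applications of the triangle inequality yield, for any $z\in B$,
\[
d(z,x') \;\leq\; d(z,x) + d(x,y) + d(y,x') \;\leq\; r + r + \radius(B') \;\leq\; 4\,\radius(B') + \radius(B') \;=\; 5\,\radius(B'),
\]
so $z\in 5B'$, as required.

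The main technical point is the appeal to Zorn's lemma within each generation to secure the maximal disjoint sub-families; once that is granted, the size-class partition ensures that any ball in $\cF_n$ can only be blocked by a ball of at least half its radius, and the factor $5$ then drops out of a one-line triangle inequality. No further obstacle arises in the argument.
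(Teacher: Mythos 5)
The paper does not prove this lemma: it invokes it as a standard covering result and refers the reader to Heinonen \cite{jh} and Mattila \cite{mat}. Your argument is correct and is essentially the proof one finds in those references: dyadic size classes $\cF_n$, a Zorn's-lemma selection of a maximal disjoint sub-collection within each class (which works because the union of a chain of pairwise-disjoint sub-families is again pairwise disjoint), and the observation that any $B\in\cF_n$ not selected must meet some chosen $B'$ of radius $>\tfrac12\radius(B)$, whence the triangle inequality gives $B\subset 5B'$; the only point worth flagging, which you correctly flag yourself, is that ``balls'' must be understood as coming with a prescribed centre and radius, since in an abstract metric space the radius is not determined by the underlying set.
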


The  following measure theoretic result, which is an extension of Proposition~1 in \cite[\S8]{BDV06}, provides a mechanism for establishing full measure statements.

\begin{lemma}\label{snowya}
Let $(\Omega, \cA, \mu, d) $ be a metric measure space equipped with a Borel doubling  probability measure $\mu$. Let $E$
be a Borel subset of\/ $\Omega$. Assume that there are constants
$r_0,c>0$ such that for any ball $B$ centred in $\supp\mu$ with $r(B)<r_0$, we have that
\begin{equation}\label{vb7}
\mu(E\cap B)\ge c\ \mu(B).
\end{equation}
Then,
$\mu(E)  =   1  $.
\end{lemma}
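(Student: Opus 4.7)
The plan is to argue by contradiction: suppose $\mu(E)<1$, so that $F:=\Omega\setminus E$ is a Borel set with $\mu(F)>0$. The aim is to produce a ball centred in $\supp\mu$, of radius less than $r_0$, on which $E$ occupies strictly less than the fraction $c$ of the $\mu$-measure, directly contradicting \eqref{vb7}. Since $\mu(\Omega\setminus\supp\mu)=0$, we have $\mu(F)=\mu(F\cap\supp\mu)$, so the argument localises to $\supp\mu$.

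By outer regularity of the Borel probability measure $\mu$, given any $\eta>0$ one can find an open set $U\supset F$ with $\mu(U)<(1+\eta)\mu(F)$. For each $x\in F\cap\supp\mu$, openness of $U$ lets us pick balls $B(x,r)\subset U$ with $0<r<r_0$ of arbitrarily small radius, producing a \emph{fine} cover of $F\cap\supp\mu$. Iterating Lemma~\ref{5r} on successively smaller diameter scales $2^{-n}$---a Vitali-type selection, whose validity rests on the doubling property \eqref{doub}---extracts a disjoint countable subfamily $\{B_i\}_{i\in\N}$ with $B_i\subset U$, each centred in $\supp\mu$, and $\mu\big(F\setminus\bigcup_i B_i\big)=0$. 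Applying \eqref{vb7} to each $B_i$ gives $\mu(F\cap B_i)\le(1-c)\mu(B_i)$, and so by disjointness together with $B_i\subset U$,
\[
\mu(F)\ =\ \sum_{i}\mu(F\cap B_i)\ \le\ (1-c)\sum_{i}\mu(B_i)\ \le\ (1-c)\mu(U)\ \le\ (1-c)(1+\eta)\mu(F).
\]
Choosing $\eta<c/(1-c)$ then forces $\mu(F)<\mu(F)$, the required contradiction.

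The main obstacle is the Vitali-type covering step. A single unrefined application of Lemma~\ref{5r} only delivers $\bigcup_i 5B_i\supset F\cap\supp\mu$; doubling then replaces $\mu(B_i)$ by $\mu(5B_i)\le\lambda^{3}\mu(B_i)$ in the key estimate, reducing the bound to $(1-c)\lambda^{3}(1+\eta)\mu(F)$, which contradicts $\mu(F)>0$ only when $c>1-\lambda^{-3}$. Handling arbitrarily small $c>0$ therefore requires either iterating the $5r$ selection on shrinking scales to build an essentially complete disjoint cover, or, equivalently, invoking the Lebesgue differentiation theorem for doubling metric measure spaces and contradicting \eqref{vb7} directly at a density point of $F$ inside $\supp\mu$.
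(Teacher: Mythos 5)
Your argument is correct, but it follows a different route from the paper's. The paper does not prove the lemma from scratch: it uses the $5r$ covering lemma together with doubling to upgrade the hypothesis \eqref{vb7} from balls centred in $\supp\mu$ to arbitrary balls of small radius (paying a factor $\lambda^{-3}$ in the constant), and then simply invokes Proposition~1 of \cite[\S8]{BDV06} as a black box. Your proposal is instead self-contained: you argue by contradiction from $\mu(F)>0$, use outer regularity to trap $F$ in an open set $U$ of nearly the same measure, build a fine Vitali cover of $F\cap\supp\mu$ by balls centred in $\supp\mu$ and contained in $U$ with radius $<r_0$, extract a disjoint subfamily covering $F\cap\supp\mu$ up to $\mu$-measure zero, and then sum \eqref{vb7} over that family. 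You correctly identify that a single pass of the $5r$ lemma is not enough (you would lose a factor $\lambda^{3}$ and the contradiction would fail for small $c$), and that the fix is either the standard iterated Vitali extraction in doubling metric measure spaces, or, more succinctly, the Lebesgue differentiation theorem applied at a $\mu$-density point of $F$ in $\supp\mu$, which yields a ball violating \eqref{vb7} immediately. Both fixes are valid and rest on the same doubling hypothesis the paper uses. What each approach buys: the paper's version is shorter and factors the density argument through a citation; yours is longer but transparent and does not outsource the core density step. One small point to make explicit if you write this up: for a finite Borel measure on a metric space outer regularity with respect to open sets is automatic, and the Vitali covering theorem you need is valid because all the balls in your fine cover are centred in $\supp\mu$, which is exactly where the doubling property \eqref{doub} is assumed.
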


The lemma is a standard corollary of the Lebesgue density theorem or more generally the Lebesgue differentiation theorem  for doubling metric measure spaces (see for example \cite[Theorem~1.8]{jh}.
A slightly weaker version of this lemma can also be found in \cite{BDV06}. In short, the version of this lemma established as Proposition~1 in \cite[\S8]{BDV06} requires that \eqref{vb7} holds for arbitrary balls centred in $\Omega$ rather than just in $\supp\mu$  and the proof uses covering arguments rather than the Lebesgue density theorem.

\begin{rem}
Note that the doubling assumption in Lemma~\ref{snowya} can be weakened by requiring instead that $(\Omega, \cA, \mu, d) $ is a Vitali space as defined in \cite[p.6]{jh}.
Furthermore it is also possible to remove the doubling assumption altogether from Lemma~\ref{snowya} at the price of requiring a lower bound on $\mu(E\cap B)$ for an arbitrary open set $B$ as opposed to an arbitrary ball of sufficiently small radius. We will state this version formally as it will be required in the proof of Lemma~LBC.
\end{rem}

\medskip

\begin{lemma}[Lemma~6 in \cite{BDV06}]\label{MEMLEM6}
Let $(\Omega, \cA, \mu, d) $ be a metric measure space equipped with a Borel probability measure $\mu$. Let $E$
be a Borel subset of\/ $\Omega$ and $f:(0,+\infty)\to(0,+\infty)$ be an increasing function such that $f(x)\to0$ as $x\to0$. Assume that
$$
\mu(E\cap A)\ge f(\mu(A))
$$
for any open subset $A\subset\Omega$ with $\mu(A)>0$. Then,
$\mu(E)  =   1  $.
\end{lemma}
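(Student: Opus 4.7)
The plan is to argue by contradiction using outer regularity of the measure. Suppose $\mu(E)<1$ and set $F:=\Omega\setminus E$, so that $m:=\mu(F)>0$. Since $\mu$ is a finite Borel measure on a metric space, it is outer regular: for every $\varepsilon>0$ there exists an open set $U\supset F$ with $\mu(U)<m+\varepsilon$. This is the key fact that converts the abstract positive-measure set $F$ into an \emph{open} witness, allowing the hypothesis to be applied.

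Once such $U$ is in hand, since $E$ and $F$ partition $\Omega$ and $F\subset U$, one has
\[
\mu(E\cap U)=\mu(U)-\mu(F\cap U)=\mu(U)-m<\varepsilon\,.
\]
On the other hand, $U$ is an open set with $\mu(U)\ge m>0$, so the assumption of the lemma gives
\[
\mu(E\cap U)\ge f(\mu(U))\ge f(m)\,,
\]
where the second inequality uses monotonicity of $f$. Since $f$ takes values in $(0,+\infty)$, we have $f(m)>0$, and so choosing $\varepsilon<f(m)$ at the outset produces the contradiction $f(m)\le\mu(E\cap U)<\varepsilon<f(m)$. Hence $\mu(E)=1$.

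The only non-trivial ingredient is outer regularity, which holds for every finite Borel measure on a metric space (see e.g.\ Parthasarathy, \textit{Probability Measures on Metric Spaces}, Theorem~1.2), and in particular does not require any completeness or separability assumption on $\Omega$. I do not anticipate any serious obstacle: once the approximating open set $U$ is chosen, the argument is a one-line squeeze between the upper bound $\mu(E\cap U)<\varepsilon$, forced by the small excess $\mu(U)-\mu(F)$, and the lower bound $\mu(E\cap U)\ge f(m)$, forced by the hypothesis and monotonicity of $f$.
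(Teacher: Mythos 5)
Your proof is correct, and it is the standard argument for this kind of regularity-to-full-measure lemma: pass to the complement $F=\Omega\setminus E$, use outer regularity of finite Borel measures on metric spaces to trap $F$ inside an open set $U$ with only a small measure excess, and then squeeze $\mu(E\cap U)$ between the small quantity $\mu(U)-\mu(F)$ and the fixed positive lower bound $f(\mu(U))\ge f(\mu(F))>0$ coming from the hypothesis and monotonicity. The present paper does not reprove this lemma but cites Lemma~6 of \cite{BDV06}, where the argument is of the same nature; your version is, if anything, slightly cleaner in that it makes clear the hypothesis $f(x)\to0$ as $x\to0$ is not needed — only monotonicity and strict positivity of $f$ are used.
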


\medskip

The following ``obvious'' but useful statement relates the standard doubling property \eqref{doub} for balls centred in $\supp \mu$ with the weaker property corresponding to \eqref{vb8} in which the centre can be anywhere.

\begin{lemma}\label{lem_vb}
Let $(\Omega, \cA, \mu, d) $ be a metric measure space equipped with a Borel doubling probability measure $\mu$. Let $a,b>0$ be constants and let $B$ be a ball in $\Omega$ such that $\mu(aB)\le b\mu(B)$ and $B\cap\supp\mu\neq\varnothing$. Then for any $s\ge a$ we have that
$$
\mu(sB)\le \lambda^kb \mu(B)\,,
$$
where $k\in\N$ satisfies $2^k\ge(1+s)/(a-1)$ and $\lambda $ is as in \eqref{doub}.
\end{lemma}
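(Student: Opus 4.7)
The plan is to pin down a center inside $\supp\mu$ so that we can legitimately invoke the doubling property \eqref{doub}, and then perform a standard triangle-inequality sandwich to pass between balls centered at the original center $x$ of $B$ and balls centered at this new point. Write $B = B(x,r)$ and pick any $y \in B\cap\supp\mu$, which is possible by hypothesis. The essential reason such a $y$ is needed is that \eqref{doub} only gives us control on balls centered in $\supp\mu$, whereas the hypothesis $\mu(aB)\le b\mu(B)$ is stated for a ball whose center $x$ need not lie in $\supp\mu$.

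First, I would use the triangle inequality twice. Since $d(x,y)<r$, one has the inclusion
\[
B(x,sr) \;\subseteq\; B(y,(s+1)r) \qquad\text{and}\qquad B(y,(a-1)r)\;\subseteq\; B(x,ar) \;=\; aB.
\]
The second inclusion combined with the standing hypothesis $\mu(aB)\le b\mu(B)$ yields
\[
\mu\bigl(B(y,(a-1)r)\bigr) \;\le\; b\,\mu(B).
\]

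Next, I would iterate the doubling property $k$ times. Since $y\in\supp\mu$ and $2^k(a-1)\ge s+1$, the radius $\rho := (s+1)r/2^k$ satisfies $\rho\le (a-1)r$. Applying \eqref{doub} to $B(y,\rho)$, $B(y,2\rho)$, \ldots, $B(y,2^{k-1}\rho)$ in turn (each of these balls is centered at $y\in\supp\mu$) gives
\[
\mu\bigl(B(y,(s+1)r)\bigr) \;=\; \mu\bigl(B(y,2^k\rho)\bigr) \;\le\; \lambda^k\,\mu\bigl(B(y,\rho)\bigr) \;\le\; \lambda^k\,\mu\bigl(B(y,(a-1)r)\bigr),
\]
where in the last step I have used monotonicity of $\mu$ together with $\rho\le(a-1)r$.

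Chaining these estimates together gives the required bound:
\[
\mu(sB) \;\le\; \mu\bigl(B(y,(s+1)r)\bigr) \;\le\; \lambda^k\,\mu\bigl(B(y,(a-1)r)\bigr) \;\le\; \lambda^k\,b\,\mu(B).
\]
There is no real obstacle in the argument; the only mildly delicate point is making sure that each intermediate ball entering the iterated doubling is centered at a point of $\supp\mu$ (which is why $y$ is chosen in $\supp\mu$, rather than working directly with $x$) and that the radii involved stay below the threshold $r_0$ in \eqref{doub}, which may be arranged by replacing $r$ by a smaller value if necessary or by absorbing the bound on $r$ into the statement when the lemma is later applied.
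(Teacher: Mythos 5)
Your proof is correct and follows essentially the same argument as the paper: pick a point $y\in B\cap\supp\mu$, relate $sB$ and $aB$ to balls centred at $y$ via the triangle inequality, and iterate the doubling property $k$ times. The paper phrases it by starting with the ball $B'=B(y,(a-1)r)$ and blowing it up by $2^k$ to absorb $sB$, whereas you start with $B(y,(s+1)r)$ and contract it by $2^k$ down to radius $\rho\le(a-1)r$ before appealing to monotonicity; these are the same computation in opposite directions.
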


\begin{proof}
Let $x\in B\cap\supp\mu$ and $B'$ be the ball centred at $x$ of radius $(a-1) r(B)$. Then clearly $B'\subset aB$.
Since $B'$ is centred in the support of $\mu$, the doubling property \eqref{doub} is applicable to it and we have that for every $k\in\N$
$$
\mu(2^kB')\le \lambda^k\mu(B')\,.
$$
Since $B'\subset aB$ and $\mu(aB)\le b\mu(B)$, we therefore have that
\begin{equation}\label{eq1}
\mu(2^kB')\le \lambda^k\mu(aB)\le \lambda^kb\mu(B)\,.
\end{equation}
It remains to observe that $sB\subset 2^kB'$ provided that $(a-1)2^k\ge1+s$  and so  \eqref{eq1} implies the required inequality.
\end{proof}

\subsection{Proof of Lemma~LBC}

Let $A$ be any open subset of $\Omega$ and $\{L_{i,A}\}_{i\in\N} $ be the sequence of sets as in Lemma~LBC. In particular, by definition, $L_{i,A}\subset E_i\cap A$ for every $i\in\N$ and therefore
$$
\limsup_{i\to\infty} L_{i,A}\ \subseteq \
A\cap\limsup_{i\to\infty}E_i \, .
$$
On applying Lemma~DBC (the standard divergent Borel--Cantelli Lemma) it follows that
$$
\mu \Big( A\cap\limsup_{i\to\infty} E_i \Big)  \ \geq \  \mu \Big(
\limsup_{i\to\infty} L_{i,A}  \Big) \ \geq \  f(\mu(A))\, .
$$
Also recall that $E_{i}$ is a Borel set for every $i\in\N$ and therefore $E_{\infty}:=\limsup_{i\to\infty} E_i$ is a Borel subset of $\Omega$. Then, applying Lemma~\ref{MEMLEM6} with $E= E_{\infty}$  implies  that $ \mu (  E_\infty) = 1 $ as desired.

If $\mu$ is a doubling measure and $f(x)=cx$ for some constant $0<c \le 1$, then the  `moreover' part of Lemma~LBC follows on applying Lemma~\ref{snowya} instead of Lemma~\ref{MEMLEM6}.

%
%
%
%
%
%
%
%
%
%
%

\subsection{Proof of Proposition \ref{sv2021}}

\noindent $\bullet$ Step 1:  (A) $\Longrightarrow $ (B). \   This is obvious since $\mu$ is a probability measure.

\bigskip

\noindent $\bullet$ Step 2:  (B) $\Longrightarrow $ (C). \ Let $B$ be any ball in $\Omega$ centred in $\supp\mu$. In particular, we have that $\mu(B)>0$.  Let
$\cF:=\{B_i\,:\,B_i\ \cap \tfrac12B \cap\supp\mu \neq \emptyset \,
,\ i\ge G\}$.  Since, $\diam(B_i)\to0$ as
$i\to\infty$,  we can ensure that every ball in $ \cF$ is contained
in $ B $ by increasing $G$ if necessary.
In view of the $5r$ covering lemma (Lemma \ref{5r}), there exists a disjoint sub-family ${\cal
G} $ such that
$$
\bigcup_{B_i\in {\cal F} } B_i \ \subset \ \bigcup_{B_i \in {\cal G} } 5B_i \ \ \ .
$$
It follows that
$$
\tfrac12 B \cap
\limsup_{i\to\infty}B_i \cap \supp\mu \ \subset \ \bigcup_{B_i\in {\cal F} } B_i \cap \supp\mu
\ \subset \ \bigcup_{B_i \in {\cal G} } 5B_i \cap \supp\mu\ \ \ .
$$
Hence,
\begin{eqnarray*}
\mu \left(\bigcup_{B_i \in {\cal G} } 5B_i\right)  \
\geq \ \mu \big( \mbox{$\frac{1}{2}$} B \cap
\limsup_{i\to\infty}B_i \big)  \stackrel{(\ref{e:011})}{\ = \ }
  \mu(\mbox{$\frac{1}{2}$} B \big) \stackrel{(\ref{doub})}{\ \geq \ }
\frac{1}{\lambda}   \; \mu(B) \ .
\end{eqnarray*}

\noindent However, since ${\cal G} $ is a disjoint collection of
balls, which have non-empty intersection with $\supp\mu$, we have that
\begin{eqnarray*}
\mu \left(\bigcup_{B_i \in {\cal G} } 5B_i\right) \ \leq \ \sum_{B_i \in {\cal G} } \mu \left( 5B_i\right)
\stackrel{\eqref{vb8}\;\&\;\text{Lemma~\ref{lem_vb}}}{\ \leq \ }     \ \lambda^kb   \  \sum_{B_i \in {\cal G} } \mu \left( B_i\right) \ =\     \ \lambda^kb \  \
\mu \left(\bigcup^\circ_{B_i \in {\cal G} } B_i\right)\
\ ,
\end{eqnarray*}
where $k:=\max\{1,\left\lceil\log_2\frac{6}{a-1}\right\rceil\}$.
Thus, \begin{equation}
\frac{1}{\lambda^{k+1}b} \ \mu(B)  \; \le \; \mu \left(\bigcup^\circ_{B_i \in
{\cal G} } B_i\right)\ = \sum_{B_i \in
{\cal G} }  \mu \left(B_i\right) \ \leq
\ \mu(B) \ . \label{kgb001} \end{equation}
If $\cG$ is infinite, the sum in \eqref{kgb001} is convergent and therefore
there exists some $j_0 > G $ for which
\begin{equation}
\sum_{B_i \in {\cal G} \, : \, i
\geq j_0  }\mu \left( B_i\right)\ = \ \mu \left(\bigcup^\circ_{B_i \in {\cal G} \, : \, i
\geq j_0  } B_i\right) \ < \ \frac12 \;\frac{1}{\lambda^{k+1}b}
 \; \ \mu(B) \ .
\label{kgb002}
\end{equation}
Obviously,  this is also true if $\cG$ is finite.
Now let $\kgb := \{B_i : B_i \in
{\cal G} , i < j_0 \} $. Clearly, this is a finite sub-collection
of $\{B_i\,:\,i\ge G\}$. Moreover, in view of (\ref{kgb001}) and
(\ref{kgb002}) the collection $\kgb$
satisfies the desired properties.

 \bigskip

\noindent $\bullet$ Step 3:  (C) $\Longrightarrow $ (D). \   For any ball
$B$ centred in $\supp\mu$ and any $G\in \N$,
 let $\kgb$ be the finite sub-collection of disjoint balls associated with (C) and define
\begin{equation}\label{vb22}
E_{G,B} \, :=  \bigcup_{L\in\kgb}^{\circ} \!\! L \  \ \subseteq \ B \, .
\end{equation}
It follows from (\ref{e:014}) that $$\mu(E_{G,B})  \ge  \kappa \,
\mu(B) \, ,  $$

\noindent which in turn implies that
$  \sum_{i=1}^{\infty} \mu(E_{G_i,B}) \geq   \sum_{i=1}^{\infty}  \kappa \,
\mu(B)   =\infty  \, $ for any subsequence $(G_i)_{i\in\N}$ of natural numbers, and
$$
     \mu\big(\egb \cap E_{G',B} \big) \ \le \  \mu(B) \ \le \  \frac{1}{\mu(B) \; \kappa^2}  \ \ \mu(\egb)  \, \mu(E_{G',B})  \,
$$
for any  pair of natural numbers $G$ and $G'$.
Thus, the sets $E_{G,B}$ satisfy the desired properties.

 \bigskip

\noindent $\bullet$ Step 4: (D) $\Longrightarrow $ (E). \ Let $B$ be any ball
centred in $\supp\mu$ and for any $G\in \N$ let $E_{G,B}\subset B$ be as in (D), and let $\kgb$ be a finite collection of disjoint balls from $\{B_i:i\ge G\}$ that constitute $E_{G,B}$, that is \eqref{vb22} holds. Observe that for any pair of natural numbers $G$ and $G'$
   \begin{eqnarray}  \label{snow3}
   \mu\big(\egb \cap E_{G',B} \big) \ &  =   &  \  \mu\left(  \Big( \bigcup^\circ_{L\in \kgb}\!\!L     \Big) \ \cap  \ \Big(\bigcup^\circ_{L'\in \cK_{G',B}}\!\!L'\Big) \right) \nonumber \\[0ex]
   & = & \
    \sum_{L \in \kgb}  \ \sum_{L' \in \cK_{G',B} }      \mu \Big( L \cap \ L'  \Big)    \nonumber  \\[0ex]
    &\stackrel{(\ref{snow1})}{\ \leq \ }  &  \ \frac{1}{\mu(B) \; \kappa^2}  \ \ \mu(\egb)  \  \  \mu(E_{G',B})  \nonumber  \\[0ex]
    & = & \ \frac{1}{\mu(B) \; \kappa^2}  \ \ \sum_{L \in \kgb}  \mu(L)  \ \sum_{L' \in \cK_{G',B} }  \mu(L')   \, .
   \end{eqnarray}
Let $G_1=1$ and fix the collection $\cK_{G_1,B}$. Define $G_2=t+1$ where $t$ is the largest index such that $B_t\in \cK_{G_1,B}$. Since $\cK_{G_1,B}$ is finite this is clearly possible. With $G_2$ defined, we can fix the collection $\cK_{G_2,B}$ and proceed by induction as follows. Suppose the integers $G_1,\dots,G_n$ and the corresponding collections $\cK_{G_1,B}$,\dots,$\cK_{G_n,B}$ have been determined. Define $G_{n+1}=t+1$  where $t$ is the largest index such that $B_t\in \cK_{G_n,B}$. With $G_{n+1}$ defined,  we  can fix the collection $\cK_{G_{n+1},B}$ and we are done. Now, let $\{L_{s, B} \}_{s \in \N} $ be the sequence of balls contained in $B$ obtained by placing the balls from $\{\cK_{G_i,B}: i \in \N\}$ in the same order as in $\{B_i\}_{i\in\N}$. In view of the choice of the integers $G_i$, the sequence  $\{L_{s, B} \}_{s \in \N} $ is a well defined sub-sequence of $\{B_i\}_{i\in\N}$. For $M\in \N$, let
  $$
   Q_M \  := \ \sum_{i=1}^M   \# \cK_{G_i,B}  \, .
 $$
It then follows that for any $M \geq 2 $
 \begin{eqnarray*}  \label{snow4}
  \sum_{s,t=1}^{Q_M}  \mu\big(L_{s,B}\cap L_{t,B} \big)  \ &  =   &  \    \sum_{i,j=1}^{M} \mu\big(E_{G_i,B} \cap E_{G_j,B} \big) \nonumber \\[0ex]
   & = & \  \sum_{i,j=1}^{M}  \
    \sum_{L \in \cK_{G_i,B}}  \ \sum_{L' \in \cK_{G_j,B} }      \mu \Big( L \cap \ L'  \Big)    \nonumber  \\[0ex]
   &\stackrel{(\ref{snow3})}{\ \leq \ }  & \ \frac{1}{\mu(B) \; \kappa^2}  \ \  \sum_{i,j=1}^{M}  \ \sum_{L \in \cK_{G_i,B}}  \mu(L)  \ \sum_{L' \in \cK_{G_j,B} }  \mu(L')   \nonumber  \\[0ex]
   & =  & \ \frac{1}{\mu(B) \; \kappa^2}  \ \  \Big( \sum_{i=1}^{M}  \ \sum_{L \in \cK_{G_i,B}}  \mu(L)  \Big)^2  \\[0ex]
   & =  & \ \frac{1}{\mu(B) \; \kappa^2}  \ \  \Big( \sum_{s=1}^{Q_M}   \mu(L_{s,B})  \Big)^2
     \, .
   \end{eqnarray*}
This together with the fact that
$$  \sum_{s=1}^{\infty} \mu(L_{s,B}) =  \ \sum_{i=1}^{\infty} \mu(E_{G_i,B}) =  \infty  \, , $$
shows that the sequence $\{L_{s, B} \}_{s \in \N} $  satisfies the desired properties.

\bigskip

\noindent $\bullet$ Step 5:  (E) $\Longrightarrow $ (A). \  This follows immediately on applying the ``moreover'' part of Lemma~LBC with $c=\kappa^2$.


\subsection{Proof of Proposition~\ref{sv2021positive}}

The proof is very similar to that of  Proposition~\ref{sv2021} and  so we will simply provide a sketch.

\medskip

\noindent $\bullet$ Step 1:  (A) $\Longrightarrow $ (B). \ For  $G\in \N$, let
$\cF:=\{B_i\cap\supp\mu\neq\varnothing\,:\, i\ge G\}$.
In view of the $5r$ covering
lemma (Lemma \ref{5r}), there exists a disjoint sub-family ${\cal
G} $  of $\cF$ such that $$ \bigcup_{B_i\in {\cal F} } B_i \ \subset \
\bigcup_{B_i \in {\cal G} } 5B_i \ \ \ . $$ It follows that
\begin{eqnarray*}
\mu \left(\bigcup_{B_i \in {\cal G} } 5B_i\right)  \
\geq \ \mu \big(
E_{\infty} \big) \
\end{eqnarray*}
and the same argument leading to \eqref{kgb001} shows that
 \begin{equation}
 \mu \left(\bigcup^\circ_{B_i \in
{\cal G} } B_i\right) \ \geq \ \frac{1}{\lambda^{k+1}b} \
\ \mu \big(
E_{\infty} \big) \ , \label{kgb001snow}
\end{equation}
where $k$ is the same integer  as in  \eqref{kgb001}. Furthermore,  the same argument leading to \eqref{kgb002} shows that
there exists some $j_0 > G $ for which
\begin{equation}  \mu \left(\bigcup^\circ_{B_i \in {\cal G} \, : \, i
\geq j_0  } B_i\right) \ < \ \frac12\;\frac{1}{\lambda^{k+1}b}
 \; \ \mu \big(
E_{\infty} \big) \, := \, \kappa.
\label{kgb002snow} \end{equation}  Then, in view of (\ref{kgb001snow}) and
(\ref{kgb002snow}) the finite sub-collection $\cK_G := \{B_i : B_i \in
{\cal G} , i < j_0 \} $ of $\{B_i\,:\,i\ge G\}$
satisfies the desired properties.

 \bigskip

\noindent $\bullet$ Step 2:  (B) $\Longrightarrow $ (C). \   For any  $G\in \N$,
 let $\cK_G$ be the finite sub-collection of disjoint balls associated with (B) and define
\begin{equation}\label{vb26}
E_{G} \, :=  \bigcup_{L\in\cK_G}^{\circ} \!\! L \,.
\end{equation}
It follows from (\ref{e:014snow}) that $\mu(E_{G})  \ge  \kappa \,
  $  which in turn implies that
$  \sum_{i=1}^{\infty} \mu(E_{G_i}) =\infty  \, $ for any subsequence $(G_i)_{i\in\N}$ of natural numbers, and that
$$
     \mu\big(E_G \cap E_{G'} \big) \ \le \  1 \ \le \  \kappa^{-2}  \ \ \mu(E_G)  \, \mu(E_{G'})  \,
$$
for any  pair of natural numbers $G$ and $G'$.
Thus, the sets $E_{G}$ satisfy the desired properties.

 \bigskip

\noindent $\bullet$ Step 3:  (C) $\Longrightarrow $ (D). \
\ For any $G\in \N$ let $E_{G}\subset \Omega$ be as in (C), and let $\cK_G$ be a finite collection of disjoint balls from $\{B_i:i\ge G\}$ that constitute $E_{G}$, that is \eqref{vb26} holds. Observe that the same argument leading to \eqref{snow3} shows that for any pair of natural numbers $G$ and $G'$
   \begin{eqnarray}  \label{snow3sv}
   \mu\big(E_G \cap E_{G'} \big) \ &  =  &  \
    \sum_{L \in \cK_G}  \ \sum_{L' \in \cK_{G'} }      \mu \Big( L \cap \ L'  \Big)    \nonumber  \\[1ex]
    &\stackrel{(\ref{snow1sv})}{\ \leq \ }  & \ \kappa^{-2}  \ \ \sum_{L \in \cK_G}  \mu(L)  \ \sum_{L' \in \cK_{G'} }  \mu(L')   \, .
   \end{eqnarray}
   Now, let  $\{L_{s} \}_{s \in \N} $ be the sub-sequence of $\{B_i\}_{i\in\N}$ balls corresponding to the sub-collections $\{\cK_{G_i} : i \in\N\} $, where the sequence of natural numbers $G_1,G_2,\dots$ is defined in the same way as within Step~4 of the proof of Proposition~\ref{sv2021}. For $M\in \N$, let $
   Q_M  :=  \sum_{i=1}^M   \# \cK_{G_i}  \, .
 $
 Then, the same argument used within Step~4 of the proof of Proposition~\ref{sv2021}, shows that the sequence $\{L_{s} \}_{s \in \N} $  satisfies the desired properties.

\bigskip

\noindent $\bullet$ Step 4:  (D) $\Longrightarrow $ (A). \  By definition,
$
\limsup_{i\to\infty} L_{i}\ \subseteq \
E_{\infty}
$.
Thus, on applying Lemma~DBC (the standard divergent Borel--Cantelli Lemma) it follows that
$$
\mu \big( E_\infty \big)  \ \geq \  \mu \Big(
\limsup_{i\to\infty} L_{i}  \Big) \ \geq \  \kappa^2  \, > \, 0  \, .
$$

\subsection{Proof of Theorem~\ref{opensets}}

The sufficiency side of Theorem~\ref{opensets} is an immediate consequence of the ``moreover'' part of Lemma~LBC. Thus we only have to prove the necessity side.  This would clearly follow
on mimicking the proof of Proposition~\ref{sv2021} if we  could establish the analogue of Part~(C) from \eqref{e:011} which trivially follows from our working assumption   that $\mu(E_\infty)=1$.
Thus, with this in mind, let $B$ be any ball in $\Omega$ centred in $\supp\mu$. In particular, we have that $\mu(B)>0$.
Let $\cF:=\{B(x)\subset E_i\cap B\,:\,x\in B\cap E_i\cap\supp\mu\,
,\ i\ge G\}$.
In view of the $5r$ covering lemma (Lemma \ref{5r}), there exists a disjoint sub-family ${\cal
G} $ such that
$$
\bigcup_{L\in {\cal F} } L \ \subset \ \bigcup_{L \in {\cal G} } 5L\ \ \ .
$$
It follows that
$$
B \cap \limsup_{i\to\infty}E_i \cap \supp\mu \ \subset \ \bigcup_{L\in {\cal F} } L \cap \supp\mu
\ \subset \ \bigcup_{L \in {\cal G} } 5L \cap \supp\mu\ \ \ .
$$
Hence,
\begin{eqnarray*}
\mu \left(\bigcup_{L \in {\cal G} } 5L\right)  \
\geq \ \mu \big(B \cap
\limsup_{i\to\infty}E_i \big)  \stackrel{\eqref{e:011}}{\ = \ }
  \mu(B) \ .
\end{eqnarray*}

\noindent However, since ${\cal G} $ is a disjoint collection of
balls centred in $\supp\mu$, we have that
\begin{eqnarray*}
\mu \left(\bigcup_{L \in {\cal G} } 5L\right) \ \leq \ \sum_{L \in {\cal G} } \mu \left( 5L\right)
\stackrel{\eqref{doub}}{\ \leq \ }     \ \lambda^3   \  \sum_{L \in {\cal G} } \mu \left( L\right) \ =\     \ \lambda^3 \  \
\mu \left(\bigcup^\circ_{L \in {\cal G} } L\right)\
\ .
\end{eqnarray*}
Thus,
\begin{equation}
\frac{1}{\lambda^3} \ \mu(B)  \; \le \; \mu \left(\bigcup^\circ_{L\in
{\cal G} } L\right)\ = \sum_{L \in
{\cal G} }  \mu \left(L\right) \ \leq
\ \mu(B) \ . \label{kgb001+}
\end{equation}
The sum in \eqref{kgb001} is convergent and therefore
there exists a finite sub-collection $\kgb\subset\cG$ for which
\begin{equation}
\sum_{L \in \kgb}\mu \left( L\right)\ = \
\mu \left(\bigcup^\circ_{L\in \kgb } L\right) \ \ge \ \frac12 \;\frac{1}{\lambda^3}
 \; \ \mu(B) \ .
\label{kgb002+}
\end{equation}
In view of (\ref{kgb001+}) and
(\ref{kgb002+}) the collection $\kgb$
satisfies \eqref{e:014} with $\kappa=\frac{1}{2\lambda^3}$. As already mentioned above, to complete the proof we simply replicate Steps~3~\&~4 in the proof of Proposition~\ref{sv2021}. In remains to note that within the sequence of balls arising at Step~4 there may (and most likely will) be finite disjoint collections of balls arising from the same set $E_i$. These can be grouped together in an obvious manner to form the sequence $(L_{s,B})_{s\in\N}$ as required in the statement of Theorem~\ref{opensets}.

\section{Examples of applications}

In this section we will provide two basic examples showing the conclusions of our results in action.
We wish to emphasize that the applications we discuss in this section are not new -- they have been chosen to demonstrate the key principles in a relatively simple format. New interesting recent applications can be found, for example, in \cite{Daviaud}. We start with an explicit application utilising the power of trimming within a proof of Khintchine's theorem. The proof we provide is not entirely new but, to the best of our knowledge, is simpler, due to some technical simplifications, than the existing published proofs. At the same time it leads to a slightly stronger statement than the standard one.

\subsection{The power of trimming:  Khintchine's theorem   \label{poi}}

Let $\psi:\N\to(0,+\infty)$ be a real,  positive   function.  For $q \in \N$, let
$$
E_q=E_q(\psi):= \{x\in[0,1]:\|qx\|<\psi(q)\},
$$
where $\|\cdot\|$ denotes the distance to the nearest integer, and in turn consider the $\limsup$ set
$$
W(\psi):=\limsup_{q\to\infty} E_q\,.
$$
For obvious reasons, $W(\psi)$ is usually referred to as the set of
\emph{$\psi$-well approximable numbers}.
Khintchine's fundamental theorem \cite{Kh24} in the theory of metric Diophantine approximation dates back to 1924  and it provides an elegant criterion for the `size' of $W(\psi)$ expressed in terms of one-dimensional Lebesgue measure $\lambda$.

\begin{KT} \label{KT}
Let $\psi:\N\to(0,+\infty)$ be such that $\psi(q)/q$ is monotonically decreasing. Then
  $$
 \lambda(W(\psi))=\left\{\begin{array}{ll}
 0 & \text{if  \ }\sum_{q=1}^\infty\psi(q)<\infty\,,\\[2ex]
 1 & \text{if  \ }\sum_{q=1}^\infty\psi(q)=\infty\,.
                         \end{array}
 \right.
  $$
\end{KT}

\bigskip

\begin{rem}
The above statement of Khintchine's Theorem  is in fact a slighter stronger form of the standard modern version  \cite{BDV06} in  which the $\psi(q)$ is assumed to be  monotonically decreasing.
\end{rem}

The convergence part of Khintchine's theorem is an immediate consequence of Lemma~CBC on noting that $\lambda(E_q)\le2\psi(q)$. It does not require the monotonicity assumption or indeed any other additional assumptions. In turn, the modern-days proofs of the divergence part of Khintchine's theorem exploits the principles set out in the main theorems of this paper.
For $q\in\N$ and $p\in\Z$ with $0\le p\le q$ define the balls (intervals) in $\R$
$$
E_{q,p}=\left\{x\in[0,1]:\left|x-\frac pq\right|<\frac{\psi(q)}{q}\right\}\,.
$$
Clearly, $E_q=\bigcup_{p=0}^q E_{q,p}$ and so $W(\psi)$ is the limsup set of the intervals $E_{q,p}$. In view of Cassels' zero-one law \cite{Cassels-50:MR0036787}, $\lambda(W(\psi))=1$ if and only if $\lambda(W(\psi))>0$. In turn, by Theorem~\ref{iffballspositive},
$\lambda(W(\psi))>0$ if and only if there exists a subsequence $(L_i)_i$ of $(E_{q,p})_{q\in\N,0\le p\le q}$ satisfying \eqref{vb30} and \eqref{vb31}.  The upshot of this is that establishing Khintchine's theorem boils down to finding the ``trimmed'' subsequence $(L_i)_{i\in \N}$. This can be done in several ways but probably the easiest is to impose the explicit  condition that the rational fractions $p/q$ under consideration are reduced; that is
$$
(L_i)_{i \in \N}   \, := \,  (E_{q,p})_{q\in\N,\;1\le p\le q,\; \gcd(p,q)=1}\;.
$$
For completeness we present an argument showing the validity of \eqref{vb30} and \eqref{vb31} for this ``trimmed'' subsequence, a version of which can be found in \cite[\S I.3]{Sprindzuk}.

To verify \eqref{vb30}, we start by observing that there are exactly $\varphi(q)$ (the Euler function) positive integers $p\le q$ such that $\gcd(p,q)=1$, and therefore we have that 
\begin{equation}\label{vb001}
\sum_{\substack{p=0\\[0.5ex] \gcd(p,q)=1}}^q\lambda(E_{q,p})\;=\;\frac{2\varphi(q)\psi(q)}{q}.
\end{equation}
We shall use the following well-known partial summation formula:
$$
\sum_{q=1}^Ta_qb_q=\sum_{q=1}^T(a_q-a_{q+1})(b_1+\dots+b_q)+a_{T+1}(b_1+\dots+b_T)\,,
$$
where $(a_q)_{q\in\N}$ and $(b_q)_{q\in\N}$ are any two sequences or real numbers, and the following well known asymptotics for the average order of the Euler's function:
$$
\sum_{q=1}^{T}\varphi(q)\sim\frac{3}{\pi^2}T^2~~~\text{as }~T\to\infty\,.
$$
Let $0<C_1<3/\pi^2$. Then, using the fact that $\psi(q)/q$ is decreasing, \eqref{vb001} and the trivial estimate $1+\dots+q\le q^2$, by the partial summation formula with $a_q=2\psi(q)/q$, $b_q=\varphi(q)$, we have that for sufficiently large $T$
\begin{eqnarray*}
\sum_{q=1}^{T}\sum_{\substack{p=0\\[0.3ex] \gcd(p,q)=1}}^q\lambda(E_{q,p}) & \ge  &
\sum_{q=1}^T(a_q-a_{q+1})C_1q^2+a_{T+1}C_1T^2  \\
& \ge &
C_1\left(\sum_{q=1}^T(a_q-a_{q+1})(1+\dots+q)+a_{T+1}(1+\dots+T)\right)\,.
\end{eqnarray*}
And again by the partial summation formula, this time with $a_q=2\psi(q)/q$ and $b_q=q$, we get that the above equals $C_1\sum_{q=1}^Tqa_q=2C_1\sum_{q=1}^T\psi(q)$. Hence,
\begin{equation}\label{vb3+}
\sum_{q=1}^{T}\sum_{\substack{p=0\\[0.3ex] \gcd(p,q)=1}}^q\lambda(E_{q,p})  \ \ge  \ 2C_1\sum_{q=1}^T\psi(q)
\end{equation}
for sufficiently large $T$.
In particular, this implies \eqref{vb30}.

To verify \eqref{vb31}, first observe that if $q<m$, $1\le p\le q$ and $1\le k\le m$
then $\lambda(E_{q,p}\cap E_{m,k})\le \lambda(E_{m,k})\le2\psi(m)/m$. Then, for fixed $q<m$ we get that
\begin{equation}\label{c1}
\sum_{\substack{p=0\\[0.3ex] \gcd(p,q)=1}}^q\;\;
\sum_{\substack{k=0\\[0.3ex] \gcd(k,m)=1}}^m
\lambda(E_{q,p}\cap E_{m,k})\;\le\; \frac{2\psi(m)}{m}\times \#\big\{(k,p):E_{q,p}\cap E_{m,k}\neq\varnothing\big\}\,.
\end{equation}
Further, if $x\in E_{q,p}\cap E_{m,k}$ then trivially $|qx-p|<\psi(q)$ and $|mx-k|<\psi(m)$,
whence
$$
|pm-qk|\le m\psi(q)+q\psi(m)\le 2m\psi(q)\,.
$$
Also, since the fractions $p/q$ and $k/m$ are reduced and different (for we assumed that $m> q$) we must have that
$|pm-qk|\ge1$. Thus the number of $(p,k)$ in the right hand side of \eqref{c1} is less than or equal to
the number of integer points $(p,k)$ satisfying
\begin{equation}\label{vb002}
1\le p\le q, ~1\le k\le m, ~
1\le |pm-qk|\le 2m\psi(q)\,.
\end{equation}
If all such points $(p,k)$ lie on a line, then from the last inequality of \eqref{vb002} we immediately get that their number is $\le 4m\psi(q)$. Otherwise, assuming such points exit, the set of these points has rank 2 and, by \eqref{vb002}, lies in the convex body given by
$
|p|\le q, ~|k|\le m, ~
|pm-qk|\le 2m\psi(q)
$
which has volume $\le 16m\psi(q)$. In this case, the number of such points is bounded by $32m\psi(q)+2\le 36m\psi(q)$ as a consequence of Blichfeldt's theorem \cite{Blichfeldt-1921}. Either way, the right hand side of \eqref{c1} is bounded by
$72\psi(m)\psi(q)$. Clearly, the same holds when $q>m$. Therefore, in view of the divergence sum condition
\begin{eqnarray*}
\sum_{q=1}^T\;\sum_{m=1}^T
\sum_{\substack{p=0\\[0.3ex] \gcd(p,q)=1}}^q\;\;
\sum_{\substack{k=0\\[0.3ex] \gcd(k,m)=1}}^m
\lambda(E_{q,p}\cap E_{m,k})
& \le  & 72\left(\sum_{q=1}^T\psi(q)\right)^2+2\sum_{q=1}^T\psi(q) \\ & \le & 73\left(\sum_{q=1}^T\psi(q)\right)^2
\end{eqnarray*}
for sufficiently large $T$. Together with \eqref{vb3+} this verifies \eqref{vb31} with $C=73/(4C_1^2)$.

\medskip

\begin{rem}
The question regarding the relevance of monotonicity in Khintchine's theorem remained a prominent open problem in probabilistic number theory for nearly 80 years. Indeed, in 1941 Duffin $\&$ Schaeffer showed that the monotonicity could not be removed (by providing a counterexample) and they formulated an alternative statement. This attracted much work (by Erd\"os, Vaaler, Pollington, Vaughan and Harman amongst others) and was eventually proved by Koukoulopoulos  $\&$ Maynard \cite{maynard}. All these works used trimming as the basis for their approaches very much in line with the outline above. Of course, the process and implementation of trimming are significantly more sophisticated.
\end{rem}

\medskip

\begin{rem}
The above example makes use of the power of trimming within the context of Theorem~\ref{iffballspositive}, a statement dealing with positive measure. In turn, the ``ubiquity'' technique \cite{BDV06} represents an example of the power of trimming within the context of Theorem~\ref{iffballs},  a statement dealing with full measure.  In short, the theory of ubiquitous systems provides a general framework for deducing full measure statements for a large class of $\limsup$  sets and in view of Theorem~\ref{iffballs},  it is not at all surprising  that ``trimming'' plays a central role when developing the theory.
\end{rem}

\medskip

Returning to Theorem~K, note that the convergence part implies that
$$
\lambda\big( W(\tau) \big) = 0 \qquad {\rm for \ any } \quad \tau > 1 \, ,
$$
where  for any  $\tau > 0 $ we  write $ W(\tau)$ for $ W(\psi:q \to q^{-\tau})$.
The set $W(\tau)$ is usually referred to as the set of
\emph{$\tau$-well approximable numbers\index{tau-well approximable numbers}}. The upshot of the above is that for
any $ \tau > 1$, the set of $\tau$-well approximable numbers is of
measure zero and we cannot obtain any further information regarding the
`size' of $W(\tau)$ in terms of Lebesgue measure --- it is always
zero. Intuitively, the `size' of $W(\tau)$ should decrease as $\tau$ increases.  In short, we require a more delicate notion of `size'
than simply Lebesgue measure. The appropriate notion of `size' best
suited for describing the finer measure theoretic structures of
$W(\tau)$ and indeed $W(\psi )$ is that of Hausdorff measures.

Let $(\Omega, d) $ be a  metric space and let $X$ be a subset of $\Omega$.   For $\rho
> 0$, a countable collection $ \left\{B_{i} \right\} $ of  balls in  $\Omega$ of radius $r_i \le   \rho $ for each
$i$ such that $X \subset \bigcup_{i} B_{i} $ is called a $ \rho
$-cover for $X$.  Let $s$ be a non-negative number and define $$
 {\cal H}^{s}_{ \rho } (X)
  \; := \; \inf \big\{ \sum_{i} r_i^s
\ :   \{ B_{i} \}  {\rm \  is\ a\  } \rho {\rm -cover\  of\ } X
\big\} \; , $$ where the infimum is taken over all possible $
\rho $-covers of $X$. The {\it s-dimensional Hausdorff measure}
${\cal H}^{s} (X)$ of $X$ is defined by $$ {\cal H}^{s} (X) :=
\lim_{ \rho \rightarrow 0} {\cal H}^{s}_{ \rho } (X) = \sup_{ \rho
> 0} {\cal H}^{s}_{ \rho } (X)
$$ \noindent and the {\it Hausdorff dimension} dim $X$ of $X$ by
$$ \dim_{\rm H} \, X := \inf \left\{ s\ge0 : {\cal H}^{s} (X) =0 \right\}  \, . $$
It is worth emphasizing that when $s$ is a
positive integer, then $\cH^s$ is a constant multiple of Lebesgue measure in $\R^s$.  Indeed,  when $s=1$ $\cH^s$ is $\tfrac12\lambda$. In particular,  $  \cH^1 ([0,1]) = \tfrac12\lambda([0,1])$ and it follows from the definition of Hausdorff dimension that
\begin{equation*}
    \mathcal{H}^s\big([0,1]\big) = \begin{cases}
      0 &{\rm if} \;\;\;   s> 1 \,, \\
      \infty  & {\rm if} \;\;\; s<1 \,.
    \end{cases}
  \end{equation*}
For further details concerning Hausdorff measure and dimension see \cite{falc, jh, mat}.

\medskip

The following statement  is a Hausdorff measure
analogue of Khintchine's Theorem. It provides an elegant
criterion for the `size' of the set $W(\psi) $ expressed in terms of
 the measure  $\cH^s$.   The convergent part is an immediate consequence of the natural generalization of Lemma~CBC  to Hausdorff measures (see for example \cite[Lemma~3.10]{BDod}).   As with  Khintchine's theorem,  the main substance is very much  the divergence part.

\begin{KJ}
  \label{fulllmhm}
  Let $\psi:\N\to(0,+\infty)$ be such that $\psi(q)/q$ is monotonically decreasing and let $s \in (0,1]$. Then
  \begin{equation*}
    \mathcal{H}^s\big(W(\psi)\big) = \begin{cases}
      0 &{\rm if} \;\;\;  \sum_{q=1}^{\infty}q^{1-s} \psi^s(q)<\infty\,, \\[2ex]
      \mathcal{H}^s(\I) & {\rm if} \;\;\; \sum_{q=1}^{\infty}q^{1-s} \psi^s(q)=\infty\,.
    \end{cases}
  \end{equation*}
\end{KJ}

\noindent Recall, that $\cH^1= \tfrac12\lambda$ and so when $s=1$ the above reduces to Theorem~K.  When $ s < 1 $,  the above Hausdorff measure statement is essentially due to Jarn\'{\i}k and dates back to 1931.
Note that in this case $ \cH^s ([0,1]) = \infty $ and Jarn\'{\i}k Theorem (i.e. Theorem~K-J with $s<1$) implies that
$$
\dim W(\tau)    \, = \,  \frac{2}{1 + \tau} \qquad (\tau  \geq  1) \, .
$$
Hence the
the `size' of $W(\tau)$ decreases as $\tau$ increases which is inline with our intuition.   For further details and a gentle introduction to the theory of metric Diophantine approximation see \cite{durham}.  

\medskip

The second application of our results constitutes the key element of the so-called Mass Transference Principle which enable us to deduce Theorem~K-J from Theorem~K.   At first glance this seems rather odd since Hausdorff measures are regarded as a natural refinement of Lebesgue measure.

\subsection{The power of full measure: Mass Transference Principle}

The second key example exhibits the power of full measure.
To set the scene, let $(\Omega, \cA, \mu, d) $ be a locally compact metric measure space equipped with a Borel regular probability measure $\mu$. Without loss of generality we will assume that $\Omega $ is the support of $\mu$.  With this in mind, suppose there
exist constants $ \delta > 0$, $0<a\le 1\le b<\infty$ and $r_0 > 0$ such that
\begin{equation} \label{MTPmeasure}
 a \, r ^{\delta}  \ \leq  \  \mu(B)  \ \leq  \   b \, r
^{\delta}
\end{equation}
for any ball $B=B(x,r)$ with $x\in \Omega$ and radius $r\le r_0$. Such a measure is said to be \emph{Ahlfors $\delta$-regular}. It is well know that if $\Omega$ supports an Ahlfors $\delta$-regular measure $\mu$, then   $\dim_{\rm H} \Omega = \delta$ and moreover
that $ \mu$ is strongly equivalent to $\delta$-dimensional Hausdorff measure ${\cal H}^\delta$ -- see \cite{falc,jh,mat} for
details. The latter simply means that there exists a constant $C\ge 1$ such that
for every $\mu$-measurable subset $ E $ of $\Omega$
\[
{ C^{-1}} {\cal H}^\delta(E) \leq \mu(E) \leq C {\cal H}^\delta(E)
\] and so  \eqref{MTPmeasure} is equally valid with $\mu$ replaced by  $\cH^\delta $.  Also note that it is easily verified that a $\delta$-Ahlfors regular measure is a doubling measure.  Finally, throughout this section, given $s > 0$  and a ball $B$ we define
the scaled ball
$$
B^s:=B\big(x,r^{\frac{s}{\delta}}\big)\,  .
$$
Note, by definition $B^{\delta}=B$ and  if $r<1$ and $s < \delta $ then $B^s$ is a scaled up  ball.

\medskip

Let $\{B_i\}_{i\in\N}$ be a sequence of balls in $\Omega$ with radius
$\diam(B_i)\to 0$ as $i\to\infty$ and suppose that
$$
\sum_{i=1}^{\infty} \mu(B_i) <  \infty  \, .
$$
In view of Lemma~CBC, it follows that
$$ \mu \big( \limsup_{i\to\infty}B_i
  \big)    = 0 =  \cH^\delta  \big( \limsup_{i\to\infty}B_i
  \big)   \, .     $$
However, now suppose there exists some $s > 0   $ such that the $\limsup$ set associated with the  scaled up balls $B_i^s$ has full measure; that is
 $$\mu \big( \limsup_{i\to\infty}B_i^s
  \big)    = 1 =  \cH^\delta  \big( \limsup_{i\to\infty}B_i^s
  \big)   \, .     $$
It turns out that knowing such a  full measure statement for the ``scaled up'' balls  enables us to deduce an analogous statement for the original balls.  Indeed, the following  Mass Transference Principle \cite[Theorem~3]{MTP} allows us to transfer
$\cH^\delta$-measure theoretic statements for $\limsup$ subsets of
$\Omega$ to general $\cH^s$-measure theoretic statements.

\begin{MTP}\label{MTPB}
Let $(\Omega, \cA, \mu, d) $ be a locally compact metric measure space equipped with a Borel regular $\delta$-Ahlfors regular probability measure $\mu$ supported on $\Omega$.   Let $\{B_i\}_{i \in \N}$ a sequence of balls in $\Omega$ with radius $r(B_i)\to 0$ as $i\to\infty$.
		Let $s \ge  0 $ and suppose that
$$
\mathcal{H}^\delta \big( \limsup_{i\to\infty}B_i^s\big)=\mathcal{H}^\delta(\Omega)\qquad\text{or equivalently}\qquad
\mu\big( \limsup_{i\to\infty}B_i^s\big)=\mu(\Omega).
$$
		Then,		$$\mathcal{H}^s\big(\limsup_{i\to\infty}B_i\big)=\mathcal{H}^s(\Omega).$$
	\end{MTP}
	
\bigskip

\begin{rem} Note that by the definition of Hausdorff dimension,  Theorem~MTP implies that
$ \dim_{\rm H} \big(\limsup_{n\to\infty}B_n   \big)  \ge   s \, , $
and moreover that
$\mathcal{H}^s(\limsup_{n\to\infty}B_n) = \infty    $ if $s < \delta
$.
\end{rem}

\medskip

With reference to Proposition~\ref{sv2021}, the key towards establishing the Mass Transference Principle is to make use of the fact that the full measure statement (A) implies the existence of the finite
sub-collection $\kgb$ of balls satisfying (C). In \cite{MTP}, this implication is explicitly the subject of Section 4.  In short it provides deep information regarding the local distribution of the centres of the  balls under consideration.  This is very much at the heart of the ``optimal'' Cantor construction carried out in \cite[Section 5]{MTP} that enables one to show that $\mathcal{H}^s(\limsup_{n\to\infty}B_n) = \infty $  ($=  \mathcal{H}^s(\Omega)$)  if $s < \delta$.  The Cantor construction itself is more technical rather than innovative  -- the existence of the collection $\kgb$ is the crux!

\vspace*{2ex}

\begin{rem} There have been a steady series of works \cite{AB, ABer,  MTPslice, KR, P, WW, WWW, Zh} that  extend the Mass Transference Principle in numerous  directions,  such as to systems of linear forms,  iterated function schemes and large intersection sets. For an overview of the first ten years after Theorem~MTP,  we refer the reader to the review article \cite{AT}.    The more recent work of Wang $\&$ Wu \cite{WW} is particularly notable in that it deals with  $\limsup$ sets defined via rectangles rather than simply balls.  It is well worth stressing that all the above cited variants of Theorem~MTP  have at their heart a common feature. In one form or another,  they all exploit the fact that any full measure statement such as (A) in Proposition~\ref{sv2021} implies the existence of the finite sub-collection $\kgb$ of balls satisfying (C).
\end{rem}

We bring this section to a close by using Theorem~MTP to show that within the world of classical  metric Diophantine approximation as described in \S\ref{poi},  the Lebesgue theory of $\limsup$ sets underpins the general Hausdorff theory.  This is rather surprising since the latter theory is  regarded to be a subtle refinement of the former.

  The claim is that in view of the Mass Transference Principle
we have  that
\begin{center} Khintchine's Theorem
  $ \hspace{9mm} \Longrightarrow \hspace{9mm} $ Jarn\'{\i}k's Theorem  ;
    \end{center}
    i.e., Theorem~K (which is of course Theorem K-J with $s=1$) implies Theorem~K-J for all  $ s \in (0,1)$.   First of all let us dispose of the case that
$\psi(q)/q \nrightarrow 0 $ as $q \to \infty$. Then trivially,
$W(\psi)= \I$ and the result is obvious. Without loss of generality,
assume that $\psi(q)/q \to 0 $ as $q \to \infty$. With respect to the
Mass Transference Principle, let $\Omega = \I$, $ d $ be the supremum
norm, $\delta = 1$ and  $s \in (0,1)$.  We are given that $\psi(q)/q$ is monotonically decreasing and
that $ \sum q^{1-s} \psi(q)^s = \infty $. Let
$ \theta(q) := q^{1-s} \psi(q)^s$. Then it follows that $\theta(q)/q$ is  monotonically decreasing and $ \sum \theta(q) = \infty $.  Thus, Khintchine's Theorem
implies that $ \cH^1( W(\theta)) = \cH^1( \I)$. It now follows via the Mass Transference Principle
that $ \cH^s(W(\psi)) = \cH^s(\I)= \infty $ and this completes the
proof of the divergence part of Jarn\'{\i}k's Theorem -- the main substance of Theorem~K-J. As mentioned in \S\ref{poi}, the convergence part of Theorem~K-J is a straight forward consequence of Lemma~CBC for  Hausdorff measures \cite[Lemma~3.10]{BDod}.

\vspace*{0.5ex}

\noindent{\bf Acknowledgements.} This work was partly supported by EPSRC Programme grant EP/J018260/1.


\vspace{0ex}

{\small
\noindent Victor Beresnevich:\\
 Department of Mathematics,
 University of York,\\
 Heslington, York, YO10 5DD,
 England\\
 E-mail: {\tt victor.beresnevich@york.ac.uk}

\bigskip

\noindent Sanju Velani:\\
 Department of Mathematics,
 University of York,\\
 Heslington, York, YO10 5DD,
 England\\
 E-mail: {\tt sanju.velani@york.ac.uk}

}

\end{document}